\documentclass[11pt,reqno]{amsproc}
\usepackage{amsmath}
\usepackage{amssymb}
\usepackage{amsfonts}
\usepackage[margin=1in]{geometry}
\usepackage{amsthm}
\usepackage{mathrsfs}
\usepackage{enumitem}
\numberwithin{equation}{section}
\usepackage{xcolor}
\usepackage{hyperref}
\hypersetup{
    colorlinks=true,
    citecolor=magenta,
    linkcolor=blue,
    filecolor=teal,      
    urlcolor=cyan,
    pdfpagemode=FullScreen,
    }
 \newtheorem{theorem}{Theorem}[section]
 \newtheorem{proposition}[theorem]{Proposition}

 \newtheorem{definition}[theorem]{Definition}
 
 \theoremstyle{remark}
 \newtheorem{remark}[theorem]{Remark}

 \newcommand{\pa}{\partial}
\newcommand{\la}{\label}
\newcommand{\fr}{\frac}
\newcommand{\na}{\nabla}
\newcommand{\be}{\begin{equation}}
\newcommand{\ee}{\end{equation}}
\newcommand{\ba}{\begin{array}{l}}
\newcommand{\ea}{\end{array}}
\newcommand{\Rr}{{\mathbb R}}

\begin{document}

\title[On self-similarity for incompressible 3D Euler]{On putative self-similarity for incompressible 3D Euler}

\author{Peter Constantin} 
\address{Department of Mathematics, Princeton University, Princeton, NJ 08540}
\email{\href{https://web.math.princeton.edu/~const/}{const@math.princeton.edu}}

\author{Mihaela Ignatova}
\address{Department of Mathematics, Temple University, Philadelphia, PA 19122}
\email{\href{https://sites.temple.edu/ignatova/}{ignatova@temple.edu}}

\author{Vlad Vicol}
\address{Courant Institute of Mathematical Sciences, New York University, New York, NY, 10012}
\email{\href{https://cims.nyu.edu/~vicol/}{vicol@cims.nyu.edu}}

\begin{abstract}
We consider hypothetical solutions of 3D Euler which blow up in finite time  in a self-similar fashion. We prove that if the initial data has finite kinetic energy, then the similarity exponent $\gamma$ which governs the rate of zooming in must be at least $2/5$. If a smooth globally self-similar blowup profile exists, and this profile satisfies an outgoing property, we prove that  $\gamma \geq 1/2$. For axisymmetric solutions, we establish the bound $\gamma\geq 1/2$ under the sole assumption that the velocity profile is $C^2$ smooth.
\end{abstract}

\maketitle
 
\section{Introduction}

Self-similarity~\cite{Barenblatt96} is a powerful idea: it says that things look the same at different scales, if you just know how to zoom in or out. It is a particularly useful tool for identifying singular solutions of nonlinear PDE via a reduction to ODE~\cite{Sedov18,EggersFontelos15}. 
Self-similarity is found in compressible fluids   and in many other physical systems when dynamics are determined by local interactions. Compressible Euler equations for instance exhibit self-similar explosions~\cite{Sedov46,vonNeumann47,Taylor50}, implosions~\cite{Guderley42,Hunter60,MRRS22a,BCG25,SWWZ25} and shocks~\cite{BSV22,BSV23}.

Incompressible fluids are not local, the outside matters. This makes the link between possible singularities and local self-similar behavior rather tenuous.   There have been a number of recent works where self-similar incompressible singularities have been reported, either as rigorous proofs or in computational studies. In all of them there is a remnant of compression due to either the presence of boundaries~\cite{ChenHou22,ChenHou25a,ChenHou25} and~\cite{GuoLuo14,WLGZB23,WangEtAl25,WLLB25,WLLAH25}, or the lack of smoothness of vorticity~\cite{Elgindi21,ChenHou21,EGM22,ElgindiPasqua23,Chen24,CMZ25}. The goal of this paper is to analyze constraints on putative self-similar singularities for the incompressible 3D Euler equations with smooth initial data, in the absence of boundaries.

We recall the vorticity formulation of incompressible 3D Euler equations:
\begin{align}
\partial_t \omega + u \cdot \nabla \omega = \omega \cdot \nabla u, 
\qquad
\nabla \cdot u = 0  , 
\qquad 
\omega = \nabla \times u.
\label{eq:Euler}
\end{align}
The smooth and localized initial data $\omega_0$ for the Cauchy problem associated to~\eqref{eq:Euler} is specified at time $t=0$, and the equations are posed on the whole space ($x \in \mathbb{R}^3$). 
It is well known that singularities of any kind cannot arise in finite time from smooth and localized initial data unless the vorticity becomes infinite, such that its maximum magnitude is not time integrable. This is the well-known Beale-Kato-Majda criterion~\cite{BKM84}. We say that~\eqref{eq:Euler} {exhibits a self-similar singularity} if there exists a first time $T_* >0$, a space location $x_* \in \mathbb{R}^3$, a similarity exponent~$\gamma>0$, and a vorticity profile~$\Omega$ such that 
\begin{equation}
\label{eq:self-similar:intro}
\omega(x,t) = \frac{1}{T_* - t} \, \Omega \left( \frac{x-x_*}{(T_*-t)^\gamma} \right) + \mathrm{l.o.t.}
\qquad 
\mbox{as} 
\qquad t\to T_*^-.
\end{equation}
The factor $(T_*-t)^{-1}$ appearing in~\eqref{eq:self-similar:intro} is unavoidable, and consistent with~\cite{BKM84}.
A more precise meaning of a self-similar singularity for~\eqref{eq:Euler} will be given later in the paper; see Section~\ref{sec:two:fifths} for an asymptotically self-similar blowup, and Section~\ref{sec:self-similar:Euler} for a globally self-similar blowup. 

If a true self-similar singularity forms in~\eqref{eq:Euler}, the value of the exponent $\gamma$ is an important and powerful dynamic property, and needs to be understood. Finding a profile $\Omega$ is a well known challenge. Because access to self-similar solutions is fraught with numerical and theoretical difficulties, it is useful to establish strict mathematical guardrails and to provide computational benchmarks.

The Euler equations have a great variety of different types of solutions. Blow up of smooth solutions with infinite kinetic energy has been proved~\cite{child,Cric,gib,jtstuart}. In this paper we prove that if the initial data of~\eqref{eq:Euler} has finite kinetic energy, then a solution behaving as in~\eqref{eq:self-similar:intro} must have $\gamma \geq 2/5$; see Theorem~\ref{thm:two:fifths}.

Beyond a general intrinsic interest in lower bounds for $\gamma$ as a benchmark for ongoing computational and analytical studies, in the second half of the paper we focus on the hypothetical case  when $\gamma$ lies below the parabolic threshold of $1/2$, relevant for incompressible 3D Navier-Stokes. At $\gamma = 1/2$, the advection operator $\partial_t + u \cdot \nabla$ and the dissipative operator $-\Delta$ are in exact balance when acting on vorticities in the form~\eqref{eq:self-similar:intro}.  Liouville theorems ~\cite{NRS96,Tsai98,Seregin05,ChaeWolf17} rule out certain globally self-similar solutions  for the 3D incompressible Navier-Stokes equations with $\gamma=1/2$. Under reasonable assumptions, $\gamma >1/2$ cannot be the exponent of an approximate self-similar blow up for 3D incompressible Navier-Stokes equations (see Proposition~\ref{nse}). 
For self-similar scaling with  $\gamma < 1/2$, the viscous term yields a vanishingly small force competing with  the Euler nonlinearity.  If self-similar solutions of Euler equations with $\gamma< 1/2$ are found,
they can be used to obtain  3D Navier-Stokes blow up as a perturbation of the 3D Euler one,  allowing the inviscid blowup to be used to find a viscous one. This is indeed a rigorously proven fact in the realm of compressible flows: globally self-similar implosion singularities for 3D compressible isentropic Euler equations can be used to provide asymptotically self-similar implosion singularities for 3D compressible barotropic Navier-Stokes equations with constant viscosity coefficients; see~\cite{MRRS22a,MRRS22b} and~\cite{BCG25,SWWZ25}. In the process of establishing the blow up ~\cite{MRRS22b,BCG25,SWWZ25},
in addition to having $\gamma<1/2$ the authors prove and use that the inviscid similarity profile has infinite regularity (real-analyticity) in a neighborhood of all  sonic points (stagnation points on the self-similar fast-acoustic characteristics). 

An analogous approach to establish blow up for incompressible viscous flows hinges on finding self-similar blowup of~\eqref{eq:Euler} with $\gamma < 1/2$. This is advocated in~\cite{WLGZB23}  and~\cite{WangEtAl25}. 

The paper~\cite{Elgindi25} identifies an outgoing property (cf.~\eqref{eq:outgoing:tarek}) as being an essential ingredient for the existence of self-similar profiles. This outgoing property quantifies the statement that all self-similar Lagrangian trajectories that originate from nonzero labels must escape to infinity as (self-similar) time diverges. In particular, this condition implies that the self-similar Lagrangian flow has no stagnation points, except for the trivial one at the origin.   

In this paper we prove that if a globally self-similar smooth solution to incompressible 3D Euler equations exists, and it satisfies a local version of the outgoing property, then we must have $\gamma \geq 1/2$; see Theorem~\ref{thm:outgoing:one:half} and Theorem~\ref{thm:outgoing:global:new}. Thus, in this case, the approach proposed to produce incompressible Navier-Stokes singularities via Euler self-similar solutions must fail.

Independently of the outgoing property, if a globally self-similar vorticity profile $\Omega$ exists, then it cannot be too small, as measured by a certain scaling invariant norm, see Theorem~\ref{thm:Omega:not:small}. 

When restricting our analysis to axisymmetric similarity profiles, we prove a much stronger result. If the velocity field is $C^2$ smooth, then $\gamma \geq 1/2$; see Theorem~\ref{thm:axisym:final}. Additionally, if the velocity has nonzero swirl at a stagnation point of the self-similar Lagrangian flow map, then $\gamma=1/2$; see Theorem~\ref{thm:axissym:with:swirl}.

Our proofs are based on basic, well known properties of incompressible Euler and Navier-Stokes equations. We chose to present rigorous results, with explicit minimal assumptions. The fact that $\gamma\ge 2/5$ for finite kinetic energy flows, is such a result. The relevance of the exponent $\gamma= 1/2$ in Euler flows is  one of the main messages of this paper. This exponent, which is natural for Navier-Stokes self-similarity, is natural for Euler equations as well. This is due to the invariance of circulation, a quantity with dimension of kinematic viscosity.

 
\section{On asymptotically self-similar blowup with finite kinetic energy}
\label{sec:two:fifths}

A 3D incompressible Euler singularity cannot be just a vorticity magnitude singularity---the spatial gradient of vorticity must also blow up fast enough. For $\mu \in (0,1)$, let 
\begin{equation*}
\ell_\mu (t) := \min\left \{L_0; \left(\frac{[\omega (\cdot,t)]_{\mu}}{\|u(\cdot, t)\|_{L^2}}\right)^{-\frac{2}{2\mu +5}}\right\}
\end{equation*}
be the length scale formed with the vorticity H\"{o}lder seminorm $[\omega (\cdot, t)]_{\mu} := \sup_{0 < |x-y| \le L_0}\frac{|\omega (x,t)-\omega(y,t)|}{|x-y|^{\mu}}$ and the $L^2$ norm of velocity, where $L_0$ is an arbitrary reference length scale. It was shown in~\cite[Theorem 1, p. 38]{Constantin94a} that if
\begin{equation*}
\int_0^{T_*} \ell_\mu(t)^{-\frac{5}{2}}dt <\infty,
\end{equation*}
then no singularities can occur from smooth and localized data at time $T_*$. This result follows easily from the representation of vorticity 
 and from the conservation of kinetic energy. 
 
 In \cite{Constantin94a} it is also remarked as a consequence that if the vorticity is assumed to have a globally self-similar blowup, namely 
\begin{equation*}
\omega(x,t) = \frac{1}{T_*-t}\Omega\left(\frac{x}{L(t)}\right),
\end{equation*}
then we must have
\begin{equation*}
\int_0^{T_*} L(t)^{-\frac{5}{2}}dt = \infty.
\end{equation*}
In particular, if $L(t) =L_0(1 -\frac{t}{T_*})^\gamma$ as in~\eqref{eq:self-similar:intro}, then $\gamma\geq 2/5$ is necessary for blow up.  Here we revisit this result, without assuming the existence of a globally self-similar profile; instead, we assume only local behavior consistent with self-similarity. Furthermore, we link the putative exponent $\gamma$ to the behavior of the velocity away from the local self-similar behavior. In \cite{Chae07} it was shown that self-similar blow up cannot occur if the vorticity profile decays at infinity fast enough.\footnote{From a complementary viewpoint, for locally self-similar profiles in the corresponding range of exponents, it was shown in~\cite{BronziShvydkoy15} (see also~\cite{Shvydkoy13}) that a nontrivial finite-energy blowup must carry a positive amount of energy, in the sense that the local energy average admits a lower bound matching the upper bound dictated by finite kinetic energy.} This is not what we do here, our assumed behavior does not invoke a profile, but if a profile existed, it would have required only that the profile's first derivatives be bounded in a fixed ball. We  make assumptions about the nature of the true solution of~\eqref{eq:Euler}, which would be automatically satisfied if a self-similar $C^1$-smooth vorticity profile existed.

\begin{theorem}
\label{thm:two:fifths}
Assume the 3D Euler equation~\eqref{eq:Euler} has initial data $\omega_0 \in C^1(\mathbb{R}^3)$ with finite kinetic energy, i.e.~$u_0 = (-\Delta)^{-1} \nabla \times \omega_0 \in L^2(\mathbb{R}^3)$. Assume that the resulting local-in-time smooth solution blows up at some finite time $T_*>0$. If there exists $\gamma>0$ such that
\begin{equation}
\sup_{t\in [0,T_*)} (T_*-t)^{1+\gamma} \|\nabla \omega(\cdot,t)\|_{L^\infty(\mathbb{R}^3)} < \infty,
\label{eq:two:fifths:ass:1}
\end{equation}
then $\gamma \geq 2/5$. 
\end{theorem}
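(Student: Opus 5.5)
Assume $\gamma<2/5$ and derive a contradiction with the blowup via the Beale--Kato--Majda criterion. By \cite{BKM84} it is enough to show $\int_0^{T_*}\|\omega(\cdot,t)\|_{L^\infty}\,dt<\infty$. The quantitative input is an interpolation estimate that bounds $\|\omega\|_{L^\infty}$ by the conserved kinetic energy and $\|\nabla\omega\|_{L^\infty}$. This is the Lipschitz ($\mu=1$) analogue of the length scale $\ell_\mu$ from \cite{Constantin94a}. If $\gamma<2/5$, the bound is integrable in time.

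\textbf{Step 1 (local representation of vorticity).} Fix $x$ and $t$, and a radius $\rho>0$ to be chosen. Let $\chi$ be a smooth bump with $\int\chi=1$ supported in the unit ball, and set $\chi_\rho=\rho^{-3}\chi(\cdot/\rho)$. Write
\[
\omega(x)=\big(\omega(x)-(\chi_\rho*\omega)(x)\big)+(\chi_\rho*\omega)(x).
\]
The first term is bounded by $\rho\|\nabla\omega\|_{L^\infty}$, by the mean value theorem. For the second term use $\omega=\nabla\times u$ and integrate by parts:
\[
|(\chi_\rho*\omega)(x)|\le \|\nabla\chi_\rho\|_{L^2}\|u\|_{L^2}\lesssim \rho^{-5/2}\|u\|_{L^2}.
\]
Energy conservation for smooth solutions on $[0,T_*)$ gives $\|u(t)\|_{L^2}=\|u_0\|_{L^2}=:E^{1/2}$. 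Hence
\[
\|\omega(t)\|_{L^\infty}\lesssim \rho\|\nabla\omega(t)\|_{L^\infty}+\rho^{-5/2}E^{1/2}.
\]
Optimizing in $\rho$ yields
\[
\|\omega(t)\|_{L^\infty}\lesssim E^{1/7}\|\nabla\omega(t)\|_{L^\infty}^{5/7}.
\]

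\textbf{Step 2 (time integration).} By \eqref{eq:two:fifths:ass:1}, $\|\nabla\omega(t)\|_{L^\infty}\le C(T_*-t)^{-(1+\gamma)}$. Therefore
\[
\|\omega(t)\|_{L^\infty}\lesssim (T_*-t)^{-\frac{5}{7}(1+\gamma)}.
\]
This is integrable on $[0,T_*)$ if and only if $\frac57(1+\gamma)<1$, i.e.\ $\gamma<2/5$. In that case BKM forbids blowup at $T_*$, a contradiction, so $\gamma\ge 2/5$. As a consistency check, this matches the criterion $\int\ell^{-5/2}\,dt<\infty$ from \cite{Constantin94a} with $\mu=1$: there $\ell\sim(\|\nabla\omega\|_{L^\infty}/E^{1/2})^{-2/7}$, so $\ell^{-5/2}\sim\|\nabla\omega\|_{L^\infty}^{5/7}$.

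\textbf{Main obstacle (justification).} The plan needs three facts on $[0,T_*)$: energy conservation, $\omega=\nabla\times u$ with $u\in L^2$, and the validity of BKM for $C^1$ data with finite energy. The mean-value bound needs $\nabla\omega\in L^\infty$ at each time, which the hypothesis provides. If $C^1$ alone is not enough regularity for the standard BKM framework, I would mollify slightly, or invoke BKM in the form that applies to the local-in-time solution class referenced in the statement. Alternatively, I would apply \cite[Theorem 1]{Constantin94a} directly with a $\mu<1$ Hölder seminorm, bounding $[\omega]_\mu\le C[\omega]_1^{\mu}\|\omega\|_\infty^{1-\mu}$. That route is messier, so the direct BKM argument above is the preferred one.
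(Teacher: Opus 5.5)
Your argument is correct, and it reaches the same $5/7$ exponent by a more direct route than the paper. The paper does not bound $\|\omega\|_{L^\infty}$ itself. It works with the stretching rate $\alpha$ in $\partial_t|\omega|+u\cdot\nabla|\omega|=\alpha|\omega|$, given by the singular-integral formula \eqref{eq:vortex:stretching:alpha}, and splits $\alpha=\alpha_{\rm in}+\alpha_{\rm out}$ at scale $R(t)$. It uses the cancellation $\omega(x)\times\xi(x)=0$ to get $|\alpha_{\rm in}|\lesssim R\|\nabla\omega\|_{L^\infty}$, and integrates by parts in $\mathrm{curl}_y$ to get $|\alpha_{\rm out}|\lesssim R^{-5/2}\|u\|_{L^2}$. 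Optimizing gives $\|\alpha\|_{L^\infty}\lesssim\|u_0\|_{L^2}^{2/7}\|\nabla\omega\|_{L^\infty}^{5/7}$, hence $\int_0^{T_*}\|\alpha\|_{L^\infty}\,dt<\infty$, which is combined with BKM.

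Your mollifier split is the same near-field/far-field dichotomy, applied to $\omega$ rather than to $\alpha$. It amounts to the interpolation inequality $\|\omega\|_{L^\infty}\lesssim\|u\|_{L^2}^{2/7}\|\nabla\omega\|_{L^\infty}^{5/7}$. Since BKM only needs $\int_0^{T_*}\|\omega\|_{L^\infty}\,dt<\infty$, this suffices. It avoids the stretching-factor formula entirely, and it extends verbatim to Remark~\ref{rem:two:fifths}(b) by using $\|\nabla\chi_\rho\|_{L^{p'}}\|u\|_{L^p}\sim\rho^{-1-3/p}\|u\|_{L^p}$.

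The paper's route buys a somewhat stronger intermediate conclusion. Integrability of the stretching rate gives $\|\omega(t)\|_{L^\infty}\le\|\omega_0\|_{L^\infty}e^{\int_0^t\|\alpha\|_{L^\infty}}$, a uniform bound rather than merely an integrable one. More to the point for the paper, it sets up the stretching-factor machinery reused in Section~\ref{sec:self-similar:Euler} for $\mathsf{A}$ and Theorem~\ref{thm:Omega:not:small}. Your concerns in the last paragraph about energy conservation and the applicability of BKM are not gaps relative to the paper, which invokes the same two facts for the local smooth solution.
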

\begin{remark}
\label{rem:two:fifths}
A few comments concerning Theorem~\ref{thm:two:fifths} are in order:
\begin{itemize}[leftmargin=2em] 
\item[(a)] For a self-similar blowup (see~\eqref{eq:self-similar:intro}) with profile $\Omega$, assumption \eqref{eq:two:fifths:ass:1} is automatic if $\nabla \Omega \in L^\infty$.

\item[(b)] If in addition to~\eqref{eq:two:fifths:ass:1} we have that 
\begin{equation}
\label{eq:two:fifths:ass:2}
\sup_{t\in[0,T_*)} \|u(\cdot,t)\|_{L^p(\mathbb{R}^3)} < \infty,
\end{equation}
for some $p\geq 2$, 
then the lower bound on $\gamma$ becomes 
\[
\gamma \geq \frac{p}{p+3}.
\]
In particular, if $u \in L^\infty(0,T_*;L^3(\mathbb{R}^3))$, then $\gamma \geq 1/2$.
\end{itemize}
\end{remark}

\begin{proof}[Proof of Theorem~\ref{thm:two:fifths}]
We recall from~\cite{Constantin94a} (see also~\cite{ConstantinFefferman93,Constantin17})
that the magnitude of vorticity satisfies 
\begin{equation}
\partial_t |\omega| + u \cdot \nabla |\omega| = \alpha |\omega| \,,
\label{eq:vortex:stretching:classic}
\end{equation}
where the stretching factor $\alpha$ is defined as
\begin{equation}
\label{eq:vortex:stretching:alpha}
\alpha(x,t) := \frac{3}{4\pi} \text{p.v.} \int_{\mathbb{R}^3} \left(\hat{y} \cdot \xi(x,t)\right) \Bigl( \hat{y} \cdot \left(\omega(x+y,t) \times \xi(x,t)\right)\Bigr) \frac{dy}{|y|^3}
,
\end{equation}
where $\hat{y} = y/|y| \in \mathbb{S}^2$ and $\xi(x,t) = \omega(x,t)/|\omega(x,t)| \in \mathbb{S}^2$.

We consider a smooth cutoff function $\chi \colon \mathbb{R}_+ \to \mathbb{R}$, with $\chi \equiv 1$ on $[0,1]$, and $\chi \equiv 0$ on $[2,\infty)$. Let $R(t)>0$, to be determined later. 
We decompose $\alpha = \alpha_{\rm in} + \alpha_{\rm out}$, where 
\begin{align}
\alpha_{\rm in}(x,t) 
&:= \frac{3}{4\pi} \text{p.v.} \int_{\mathbb{R}^3} \left(\hat{y} \cdot \xi(x,t)\right)
\Bigl( \hat{y} \cdot \bigl(\omega(x+y,t) \times \xi(x,t)\bigr)\Bigr) \chi\Bigl(\frac{|y|}{R(t)}\Bigr) \frac{dy}{|y|^3}
\notag \\
&= \frac{3}{4\pi}  \int_{\mathbb{R}^3} \left(\hat{y} \cdot \xi(x,t)\right)
\Bigl( \hat{y} \cdot \bigl( (\omega(x+y,t)-\omega(x,t)) \times \xi(x,t)\bigr)\Bigr) \chi\Bigl(\frac{|y|}{R(t)}\Bigr) \frac{dy}{|y|^3}
,
\label{eq:alpha:in:def}
\end{align}
and
\begin{align}
\alpha_{\rm out}(x,t) 
&:= \frac{3}{4\pi} \text{p.v.} \int_{\mathbb{R}^3} \left(\hat{y} \cdot \xi(x,t)\right)
\Bigl( \hat{y} \cdot \bigl(\omega(x+y,t) \times \xi(x,t)\bigr)\Bigr)
\left( 1- \chi\Bigl(\frac{|y|}{R(t)}\Bigr) \right) \frac{dy}{|y|^3}
\notag \\
&= \frac{3}{4\pi} \int_{\mathbb{R}^3}  u(x+y,t) \cdot \Bigl( \bigl(5(\hat{y} \cdot \xi)^2 - 1\bigr) \hat{y} - 2  \bigl(\hat{y} \cdot \xi\bigr) \xi \Bigr) \left( 1-\chi\Bigl(\frac{|y|}{R(t)}\Bigr) \right)\frac{dy}{|y|^4}
 \notag\\
 &\qquad
 -\frac{3}{4\pi R(t)} \int_{\mathbb{R}^3}  \bigl(\hat{y} \cdot \xi\bigr) u(x+y,t) \cdot \left( \hat{y} \times \bigl(\xi \times \hat{y} \bigr) \right)\chi'\Bigl(\frac{|y|}{R(t)}\Bigr)  \frac{dy}{|y|^3}.
 \label{eq:alpha:out:def}
\end{align}
In the inner integral we used that $\omega(x,t) \times \xi(x,t) = 0$, while in the outer integral we have integrated by parts with respect to $\mathrm{curl}_y$.
We make two claims:
\begin{itemize}[leftmargin=2em]  
\item There exists a constant $C_{\rm in}>0$ depending only on $\chi$, such that 
\begin{equation}
|\alpha_{\rm in}(x,t)| \leq C_{\rm in} R(t) \|\nabla \omega(\cdot,t)\|_{L^\infty(B_{2 R(t)}(x))}. 
\label{eq:alpha:in:bound}
\end{equation}
This estimate follows directly from~\eqref{eq:alpha:in:def} upon noting that $|\hat{y}| = 1 = |\xi(x,t)|$, and by bounding $|\omega(x+y,t)-\omega(x,t)|\leq |y| \|\nabla \omega(\cdot,t)\|_{L^\infty(B_{2R(t)})}$.
\item For any $p\geq 1$, there exists a constant $C_{\rm out}>0$ depending only on $p$ and $\chi$, such that 
\begin{equation}
|\alpha_{\rm out}(x,t)| \leq C_{\rm out} R(t)^{-1-\frac{3}{p}} \|u(\cdot,t)\|_{L^p(\mathbb{R}^3)}. 
\label{eq:alpha:out:bound}
\end{equation}
This estimate follows directly from~\eqref{eq:alpha:out:def}, using H\"older's inequality and $|\hat{y}| = 1 = |\xi(x,t)|$.
\end{itemize}

The theorem now immediately follows from \eqref{eq:alpha:in:bound}--\eqref{eq:alpha:out:bound}, by letting  $p=2$. Indeed, classical solutions of the 3D Euler equations conserve their kinetic energy, $\|u(\cdot,t)\|_{L^2(\mathbb{R}^3)} = \|u_0\|_{L^2(\mathbb{R}^3)}$, and hence we have an a-priori bound for $\mathsf{RHS}_{\eqref{eq:alpha:out:bound}}$. Moreover, assumption~\eqref{eq:two:fifths:ass:1} implies an upper bound for $\mathsf{RHS}_{\eqref{eq:alpha:in:bound}}$. Optimizing in $R(t)$, and using~\eqref{eq:two:fifths:ass:1} we obtain the pointwise bound
\[
|\alpha(x,t)| \leq C (T_*-t)^{-\frac{5(1+\gamma)}{7}} 
\left( \sup_{t\in[0,T_*)} (T_*-t)^{1+\gamma} \|\nabla \omega(\cdot,t)\|_{L^\infty(B_{2R(t)}(x))} \right)^{\frac 57}  
 \|u_0\|_{L^2(\mathbb{R}^3)}^{\frac 27} ,
\]
where $C = C(C_{\rm in}, C_{\rm out}) >0$.
Thus, if $5(1+\gamma)/7 <1$---which is equivalent to $\gamma < 2/5$---then $\int_0^{T_*} \|\alpha(\cdot,t)\|_{L^\infty} dt < \infty$, and so by~\eqref{eq:vortex:stretching:classic} and the Beale-Kato-Majda criterion, no blowup can occur at time $T_*$. Therefore, a singularity necessitates $\gamma \geq 2/5$.

If in addition to~\eqref{eq:two:fifths:ass:1} we also know that~\eqref{eq:two:fifths:ass:2} holds, using~\eqref{eq:alpha:out:bound} we may analogously prove 
\[
|\alpha(x,t)| \leq C (T_*-t)^{-\frac{(p+3)(1+\gamma)}{2p+3}} 
\left(\sup_{t\in[0,T_*)} (T_*-t)^{1+\gamma} \|\nabla \omega(\cdot,t)\|_{L^\infty(B_{2R(t)}(x))} \right)^{\frac{p+3}{2p+3}}  
\!\!
\sup_{t\in[0,T_*)} \|u(\cdot,t)\|_{L^p(\mathbb{R}^3)}^{\frac{p}{2p+3}} 
,
\]
where $C = C(C_{\rm in},C_{\rm out},p)>0$.
Thus, if $(p+3)(1+\gamma)/(2p+3) < 1$---which is equivalent to $\gamma < p/(p+3)$---then $\int_0^{T_*} \|\alpha(\cdot,t)\|_{L^\infty} dt < \infty$, and no blowup can occur at time $T_*$. Therefore, a singularity necessitates $\gamma \geq p/(p+3)$, as claimed in item~(c) of Remark~\ref{rem:two:fifths}.
\end{proof}
 

\section{On globally self-similar blowup}
\label{sec:self-similar:Euler}

In order to obtain better lower bounds on the similarity exponent $\gamma \geq 2/5$, we analyze the hypothetical case in which the 3D incompressible Euler equations~\eqref{eq:Euler}, written in velocity form as  
\begin{equation}
\partial_t u + u \cdot \nabla u+ \nabla p = 0 , \qquad 
\nabla \cdot u =0  , 
\label{eq:Euler:velocity}
\end{equation}
admit a globally self-similar singularity at a time $ T_* > 0$, at a single point $x_* \in \mathbb{R}^3$. 

\subsection{The self-similar ansatz}
\label{sec:standing:SS:ass}
Due to Galilean symmetry, we assume without loss of generality that the singularity occurs at the space location $x_*=0$, and 
that the similarity profile for the velocity vanishes at this point.  Due to time-rescaling symmetry, we  take without loss of generality $T_*=1$. 
That is, we investigate the globally self-similar ansatz:
\begin{equation}
\begin{aligned}
&u(x,t) = (1-t)^{\gamma-1} U(y)
, 
\qquad
p(x,t) = (1-t)^{2(\gamma-1)} P(y) 
,
\\
&\omega(x,t) = \frac{1}{1-t} \Omega(y) 
, 
\qquad
y = \frac{x}{(1-t)^\gamma}
.
\end{aligned}
\label{eq:SS:ansatz}
\end{equation}
This ansatz is less restrictive than it seems. A more general form, $\omega (x,t) =
A(t)\widetilde \Omega\left(\fr{x-x(t)}{\ell(t)}, \tau (t)\right)$ leads to the form above. Indeed, writing the Euler equations in vorticity form and using the fact that the velocity and vorticity are related by the linear relation
$\omega = \na\times u$, we see that the nonlinear term $u\cdot\na\omega-\omega\cdot\na u$ scales with amplitude $A^2$ under the general ansatz above. We thus arrive at the ODE $\dot A = A^2$ where $\dot A$ is a time derivative {\footnote{ This time derivative can be with respect to an arbitrary internal clock, that is: $\fr{d}{dt}$ can be replaced by $\fr{1}{f(t)}\fr{d}{dt}$ where $f(t)$ is an arbitrary non-vanishing function.}}. This leads inevitably to $A(t) = (T-t)^{-1}$. The rest follows dividing the Euler equation by $A^2$ and taking the simplest situation, when the ensuing coefficients are constant. Then the constant coefficient in front of $y\cdot\na \Omega$ leads to $\ell(t) = \ell_0\left(1 -\fr{t}{T}\right)^{\gamma}$. The term due to $x(t)$ is integrated, because $A$ and $\ell$ are known. It follows that $x(t) = x_* - \fr{c}{\gamma}\ell(t)$ where $c$ is a constant, and this is dealt with by translating $\widetilde \Omega$.

\subsubsection{The stationary self-similar PDE}
The similarity profiles $U,P,\Omega$ appearing in~\eqref{eq:SS:ansatz} satisfy the (stationary) self-similar Euler equation in velocity form
\begin{equation}
(1-\gamma) U + \gamma (y \cdot \nabla) U + (U \cdot \nabla) U + \nabla P = 0,
\qquad 
\nabla \cdot U = 0 ,
\label{eq:profile:U}
\end{equation}
or equivalently, the (stationary) self-similar Euler equation in vorticity form
\begin{equation}
\Omega + \gamma (y \cdot \nabla) \Omega + (U \cdot \nabla) \Omega = (\Omega \cdot \nabla) U
,
\qquad 
\Omega = \nabla \times U
,
\qquad 
\nabla \cdot U = 0 .
\label{eq:profile:Omega}
\end{equation}
As mentioned earlier, due to Galilean symmetry (translation invariance), we assume throughout this section that the self-similar velocity profile $U$ vanishes at the origin. We also only consider profiles $U$ which are sub-linear at infinity.\footnote{
These are the profiles for which we may (typically) employ a localization or cutoff  procedure at
large values of $y$, to ensure that the velocity field $u$ in original $(x,t)$ coordinates has finite kinetic energy.}
We summarize these properties as
\begin{equation}
U(0) = 0,
\qquad
\lim_{|y|\to \infty} |y|^{-1} |U(y)| = 0 \,.
\label{eq:U:non:negotiable} 
\end{equation}

\subsubsection{Space rescaling and normalization}
Note that if $U$ and $\Omega$ are a solution of \eqref{eq:profile:Omega} on $\mathbb{R}^3$, then   
$U_{\lambda}(y)= \lambda U(\tfrac{y}{\lambda})$ and $\Omega_{\lambda}(y) =  \Omega(\tfrac{y}{\lambda})$, 
are also a solution of \eqref{eq:profile:Omega} on $\mathbb{R}^3$.
Under this rescaling, we have\footnote{As we shall see later (cf.~\eqref{eq:ass:medium:rare}), $\Omega$ is only expected to lie in $L^p(\mathbb{R}^3)$ when $p > 3 \gamma$.} 
\begin{equation}
\|\nabla^2 U_\lambda\|_{L^\infty(\mathbb{R}^3)}
= \tfrac{1}{\lambda} \|\nabla^2 U\|_{L^\infty(\mathbb{R}^3)}
,\quad
\|\nabla \Omega_\lambda\|_{L^\infty(\mathbb{R}^3)}    
= \tfrac{1}{\lambda}  \|\nabla \Omega\|_{L^\infty(\mathbb{R}^3)}    
,\quad 
\| \Omega_\lambda\|_{L^p(\mathbb{R}^3)}    
=  \lambda^{\frac{3}{p}} \|\Omega\|_{L^p(\mathbb{R}^3)}   
.
\label{eq:Omega:scaling}
\end{equation}
In order to fix units, throughout this section we consider profiles which are normalized via~\eqref{eq:Omega:scaling} to satisfy
\begin{equation}
\| \nabla^2 U \|_{L^\infty(\mathbb{R}^3)}  = 1 .
\label{eq:U:scaling} 
\end{equation}

\subsubsection{Behavior of the profiles at space infinity}
While the assumption of  sublinearity of $U$ at infinity (cf.~\eqref{eq:U:non:negotiable}) is necessary for any reasonable self-similar profile, the self-similar PDEs~\eqref{eq:profile:U} and~\eqref{eq:profile:Omega} impose more stringent assumptions. Indeed, \eqref{eq:U:non:negotiable} dictates that the term $\gamma y \cdot \nabla$ is stronger than the term $U \cdot \nabla$ for large values of $|y|$. Thus, the behavior of $U$ and $\Omega$ as $|y|\to \infty$ must respect the kernel of the operators $(1-\gamma) + \gamma y \cdot \nabla$ (for $U$) and $1 +\gamma y \cdot \nabla$ (for $\Omega$). This formally suggests that the leading order behavior of the self-similar profiles in the far field is given by 
\[
|U| \sim |y|^{\frac{\gamma-1}{\gamma}} ,
\qquad\mbox{and}\qquad
|\Omega| \sim |y|^{-\frac{1}{\gamma}},
\qquad\mbox{as} \qquad |y|\to \infty.
\]
It is convenient to quantify the above asymptotic descriptions. Recalling that $U(0) = 0$, we assume that there exists a constant $\mathsf{C}_\flat > 0 $ such that
\begin{equation}
  |U(y)| \leq \mathsf{C}_\flat |y| \langle y\rangle^{-\frac{1}{\gamma}},
  \qquad
  \mbox{and}
  \qquad
  |\Omega(y)| + |\nabla U(y)| \leq \mathsf{C}_\flat  \langle y\rangle^{-\frac{1}{\gamma}},
  \qquad
  \mbox{for all}
  \qquad y\in\mathbb{R}^3.
  \label{eq:ass:medium:rare}
\end{equation}

\subsection{A remark for the viscous case.}
We consider here hypothetical  blow up  for the 3D incompressible Navier-Stokes equation {(set $\mathsf{RHS}_{\eqref{eq:Euler:velocity}} = \Delta u$ instead of $0$)}, where the vorticity $\omega(x,t)$ is approximately self-similar.

\begin{proposition}\label{nse}
Let $\omega(x,t)$ be a solution of the 3D incompressible Navier-Stokes equation which can be written as
\begin{equation*}
\omega(x,t) 
= {\omega_{\rm in}(x,t)} + \omega_{\rm out}(x,t),
\end{equation*}
where $\omega_{\rm out}$ is regular, in the sense that
\begin{equation}
\int_0^{{1}} \|\omega_{\rm out}(\cdot,t)\|_{L^2(\Rr^3)}^4dt 
\le \Gamma<\infty,
\label{omegaoutfine}
\end{equation}
and where $\omega_{\rm in}$ is supported in $B_{\ell(t)}(x(t))$
 with $\ell(t)\sim (1-t)^{\gamma}$, {and satisfies}
\begin{equation}
\|\omega_{\rm in}(\cdot,t)\|_{L^2(B_{\ell(t)}(x(t)))} \le  C (1-t)^{-1+\fr{3\gamma}{2}}
\label{omegainbad}
\end{equation}
{for some $C<\infty$.}
Then, if the solution blows up at time $t=1$, it follows that $\gamma\leq \frac{1}{2}$.
\end{proposition}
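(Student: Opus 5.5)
We argue by contradiction: assume $\gamma>\frac12$ and show that $\int_0^1\|\omega(\cdot,t)\|_{L^2}^4\,dt<\infty$. For Navier--Stokes this is the classical Ladyzhenskaya--Prodi--Serrin type condition $\nabla u\in L^4_tL^2_x$, i.e.\ $u\in L^4(0,1;\dot H^1)$. It is equivalent to $u\in L^4_t L^6_x$ ($2/4+3/6=2$), and it guarantees that the smooth solution extends past $t=1$. This contradicts the blowup assumption, so $\gamma\le\frac12$.

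\emph{Step 1 (triangle inequality).} Since $\omega_{\rm in}$ is supported in $B_{\ell(t)}(x(t))$, assumption \eqref{omegainbad} gives
\[
\|\omega(\cdot,t)\|_{L^2(\Rr^3)}\le \|\omega_{\rm in}(\cdot,t)\|_{L^2}+\|\omega_{\rm out}(\cdot,t)\|_{L^2}\le C(1-t)^{-1+\frac{3\gamma}{2}}+\|\omega_{\rm out}(\cdot,t)\|_{L^2}.
\]
Raising to the fourth power, $\|\omega\|_{L^2}^4\le 8C^4(1-t)^{-4+6\gamma}+8\|\omega_{\rm out}\|_{L^2}^4$.

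\emph{Step 2 (integrability).} The second term has time integral at most $8\Gamma$ by \eqref{omegaoutfine}. The first term is integrable on $(0,1)$ if and only if $-4+6\gamma>-1$, i.e.\ $\gamma>\frac12$. Hence for $\gamma>\frac12$ we get $\omega\in L^4(0,1;L^2)$, and since $\|\nabla u\|_{L^2}=\|\omega\|_{L^2}$ for divergence-free $u$, also $u\in L^4(0,1;\dot H^1)$.

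\emph{Step 3 (regularity criterion).} Invoke the standard estimate $\frac{d}{dt}\|\omega\|_{L^2}^2+\|\nabla\omega\|_{L^2}^2\le C\|\omega\|_{L^2}^6$, or equivalently the Serrin criterion. Writing it as $\frac{d}{dt}\|\omega\|_{L^2}^2\le C\|\omega\|_{L^2}^4\cdot\|\omega\|_{L^2}^2$, Gronwall's inequality bounds $\sup_{t<1}\|\omega(\cdot,t)\|_{L^2}$ by $\|\omega(\cdot,0)\|_{L^2}^2\exp\bigl(C\int_0^1\|\omega\|_{L^2}^4\,dt\bigr)$. The $H^1$ norm therefore stays bounded up to $t=1$, which rules out blowup.

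The only delicate point is justifying this criterion in the stated setting: a smooth solution on $[0,1)$ with finite energy, for which $\|\omega\|_{L^2}^2$ is differentiable. That is classical, so the main work is the exponent bookkeeping $-4+6\gamma>-1\iff\gamma>\frac12$.
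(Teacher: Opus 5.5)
Your proposal is correct and is essentially the paper's proof: you add the inner and outer contributions to get $\int_0^1\|\omega(\cdot,t)\|_{L^2}^4\,dt<\infty$ for $\gamma>\frac12$ (your constant $8$ from $(a+b)^4\le 8(a^4+b^4)$ is even sharper than the paper's $16$), and then invoke the classical $L^4_tL^2_x$ vorticity regularity criterion, which your enstrophy--Gronwall sketch justifies. The remaining slips are cosmetic: the scaling count should read $\frac24+\frac36=1$ for $u\in L^4_tL^6_x$ (or $\frac24+\frac32=2$ for $\nabla u\in L^4_tL^2_x$); $L^4_t\dot H^1\subset L^4_tL^6_x$ is an inclusion rather than an equivalence; and Gronwall bounds $\sup_{t<1}\|\omega(\cdot,t)\|_{L^2}^2$ (not $\|\omega\|_{L^2}$) by $\|\omega(\cdot,0)\|_{L^2}^2\exp\bigl(C\int_0^1\|\omega\|_{L^2}^4\,dt\bigr)$.
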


\begin{remark}
The assumption \eqref{omegainbad} is satisfied if
\[
\omega_{\rm in}(x,t) \sim
\frac{1}{{1}-t}\Omega\left(\fr{x-x(t)}{({1}-t)^{\gamma}}\right) 
\qquad\mbox{in} \qquad B_{\ell(t)}(x(t)), 
\qquad\mbox{as} \qquad t \to 1^-,
\] 
with $|\Omega(y)| \le  C$ for $|y|\le 1$, {and $\ell(t) \sim (1-t)^\gamma$}. 
The assumption \eqref{omegaoutfine} is satisfied {for instance} if 
$\omega_{\rm out}(x,t) = 0$ for $|x-x(t)|\le \ell(t)$,
\[
|\omega_{\rm out}(x,t)| \le C_1 |x-x(t)|^{-\fr{1}{\gamma}}, 
\qquad \mbox{for} \qquad \ell(t) \leq |x-x(t)|\leq 1,
\]
or, more generally, if
\be
\int_0^1\|\omega_{\rm out}(t)\|^4_{L^2(B_1(x(t)))}dt 
\leq C_2 <\infty,
\la{outin}
\ee
and if, for instance
\[
\sup_{|x - x(t)|\ge 1}|\omega_{\rm out}(x,t)| \le C_3.
\] 
Indeed, because $\omega_{\rm out}(x,t) = \omega(x,t)$ for all $|x-x(t)|\ge 1$ we have  $|\omega_{\rm out}(x,t)|^2 \le C_3|\omega(x,t)|$ for 
$|x-x(t)|\ge 1$. Then $\|\omega_{\rm out}(t)\|_{L^2(\Rr^3\setminus B(x(t),1))}^2 \le C_3\|\omega(t)\|_{L^1(\Rr^3)}$. As it is well known, $\sup_t \|\omega (t)\|_{L^1(\Rr^3)}$ is bounded in terms of the initial data~\cite{ConstantinArea}, so the assumptions imply that $\omega_{\rm out}\in L^4(dt; L^2(\Rr^3))$. 

Thus, the assumption~\eqref{omegaoutfine}  is satisfied if $\omega$ is a smooth vorticity matched to an inner self-similar blow up ansatz with asymptotic behavior at infinity of the type~\eqref{eq:ass:medium:rare}. The main point is that both the inner blow up profile and the matching vorticity magnitude majorant  $z(x,t) = |x-x(t)|^{-\fr{1}{\gamma}}\mathbf 1_{\{\ell(t)\le |x-x(t)| \le 1\}}$ have the property  that  they belong to $ L^4(dt; L^2(\Rr^3))$ if $\gamma>\fr{1}{2}$, which is a class of non-blow up.
\end{remark}

\begin{proof}[{Proof of Proposition~\ref{nse}}]
We add the contributions {from the inner~\eqref{omegainbad} and outer~\eqref{omegaoutfine} pieces} and deduce
\begin{equation*}
\int_0^{{1}} \|\omega (\cdot,t)\|_{L^2(\Rr^3)}^4dt \le 16\left(\Gamma + \fr{C^4}{6\gamma-3}\right).
\end{equation*}
If $\gamma>\fr{1}{2}$, the right hand side is finite, and there is no blow up because of a well-known regularity criterion~\cite{CFbook}.
\end{proof}

\subsection{Self-similar vortex stretching}
We revisit aspects of the proof of Theorem~\ref{thm:two:fifths}, under the additional assumption that a globally self-similar profile exists (as specified in Section~\ref{sec:standing:SS:ass}).

Denote the vorticity direction vector as
\[
\Xi(y) = \frac{\Omega(y)}{|\Omega(y)|}
\,,
\]
and  define the self-similar vortex stretching factor by
\[
\mathsf{A}(y): = \Xi^i(y) \partial_i U^j(y) \Xi^j(y)
.
\]
Taking the dot product of \eqref{eq:profile:Omega}  with $\Omega |\Omega|^{p-2}$ we deduce the identity
\begin{equation}
|\Omega|^p +  \tfrac 1p (\gamma y + U ) \cdot \nabla (|\Omega|^p) = \mathsf{A} |\Omega|^p 
.
\label{eq:Omega:SS:Lp}
\end{equation}
In view of~\eqref{eq:ass:medium:rare} we expect $|\Omega|$ to be integrable at infinity only for $p > 3\gamma$, which is why we stated \eqref{eq:Omega:SS:Lp} with general $p$, as opposed to the standard $p=2$ formulation.

By the very definition of $\mathsf{A}$, the fact that $|\Xi(y)|=1$, and using assumption~\eqref{eq:ass:medium:rare} we have that $|\mathsf{A}(y)|\leq \mathsf{C}_\flat \langle y \rangle^{-\frac{1}{\gamma}}$ for all $y\in \mathbb{R}^3$. We aim to obtain a direct bound for the vortex stretching term; from~\eqref{eq:vortex:stretching:alpha}, we have that 
\begin{align}
\mathsf{A}(y)
= \frac{3}{4\pi} \text{p.v.} \int_{\mathbb{R}^3} \left(\hat{z} \cdot \Xi(y)\right) \left[\hat{z} \cdot \left(\Omega(y+z) \times \Xi(y)\right)\right] \frac{dz}{|z|^3}
\label{eq:stretching_SS}
,
\end{align}
where $\hat{z} = z/|z|$.  
Let $\chi = \chi(|z|)$ be a smooth radially decreasing cutoff function, with $\chi \equiv 1$ in $B_1$ and $\chi \equiv 0$ in $B_2^\complement$. Let $L>0$ be a length scale to be optimized later. Then, from~\eqref{eq:stretching_SS},  and \eqref{eq:ass:medium:rare},  we obtain that for $p > \max\{3\gamma,1\}$: 
\begin{align}
&\left| \mathsf{A}(y)
- \frac{3}{4\pi} \int_{\mathbb{R}^3} \left(\hat{z} \cdot \Xi(y)\right) \left[\hat{z} \cdot \left(  \Omega(y+z)   \times \Xi(y)\right)\right]  \chi\left( \tfrac{|z|}{L} \right) \frac{dz}{|z|^3}
\right|
\notag\\
&\qquad
\leq 
\frac{3^{1/p} (p-1)^{(p-1)/p} }{(4\pi)^{1/p}}  \|\Omega\|_{L^p(\mathbb{R}^3)}  L^{-\frac 3p}
.
\label{eq:stretching_SS_2p}
\end{align}
On the other hand,  since $\nabla \Omega \in L^\infty(\mathbb{R}^3)$, using the cancellation property $\Omega(y) \times \Xi(y) = 0$ and Taylor's theorem we obtain the estimate
\begin{align}
&\frac{3}{4\pi} \left| \int_{\mathbb{R}^3} \left(\hat{z} \cdot \Xi(y)\right) \left[\hat{z} \cdot \left( \bigl( \Omega(y+z) - \Omega(y) \bigr) \times \Xi(y)\right)\right] \chi\left( \tfrac{|z|}{L} \right) \frac{dz}{|z|^3}
\right|
\leq 
6 \|\nabla \Omega\|_{L^\infty(\mathbb{R}^3)}  L
.
\label{eq:stretching_SS_3p}
\end{align}
Optimizing the choice of $L$ in ~\eqref{eq:stretching_SS_2p} and~\eqref{eq:stretching_SS_3p}, we deduce the pointwise bound 
\begin{align}
|\mathsf{A}(y)| \leq  \mathsf{C}_p \|\nabla \Omega\|_{L^\infty(\mathbb{R}^3)}^{\frac{3}{p+3}} \|\Omega\|_{L^p(\mathbb{R}^3)}^{\frac{p}{p+3}}
,
\qquad
\mathsf{C}_p:= 2 \cdot 6^{\frac{3}{p+3}} \bigl( \tfrac{3 }{4\pi}\bigr)^{\frac{1}{p+3}}  
(p-1)^{\frac{p-1}{p+3}}
,
\label{eq:stretching_SS_4} 
\end{align}
for all $y\in \mathbb{R}^3$.
We note that the quantity present in \eqref{eq:stretching_SS_4}, namely
$\|\nabla \Omega\|_{L^\infty(\mathbb{R}^3)}^{\frac{3}{p+3}} \|\Omega\|_{L^p(\mathbb{R}^3)}^{\frac{p}{p+3}}$,  is scaling invariant under the natural scaling from~\eqref{eq:Omega:scaling}.

We wish to pair the pointwise upper bound for $\mathsf{A}$ in~\eqref{eq:stretching_SS_4} with a lower bound. In order to achieve this, we evaluate~\eqref{eq:Omega:SS:Lp} at a global maximum of $|\Omega|$, to directly obtain:
\begin{proposition}
\label{prop:A=1}
If $y_{*}$ is a (self-similar) space location at which  $|\Omega|\not \equiv 0$ attains its global maximum, then  $\mathsf{A}(y_{*}) = 1$. 
\end{proposition}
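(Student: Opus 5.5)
We evaluate the identity \eqref{eq:Omega:SS:Lp}, say with $p=2$, at the point $y_*$. Since $|\Omega|$ has a global maximum at $y_*$ and $|\Omega(y_*)| = \max|\Omega| > 0$ (because $|\Omega|\not\equiv 0$), the function $|\Omega|^2$ is smooth near $y_*$. This follows from $\Omega \in C^1$, which is implicit in the standing assumption $\nabla\Omega\in L^\infty$; for differentiability of $|\Omega|^2$ alone, $C^1$ suffices everywhere. Moreover, $|\Omega|^2$ also attains its global maximum at $y_*$, so $\nabla(|\Omega|^2)(y_*) = 0$.

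Substituting into \eqref{eq:Omega:SS:Lp} at $y=y_*$, the transport term $\tfrac12(\gamma y_* + U(y_*))\cdot\nabla(|\Omega|^2)(y_*)$ vanishes regardless of the size of the drift $\gamma y + U$ there. We are left with $|\Omega(y_*)|^2 = \mathsf{A}(y_*)|\Omega(y_*)|^2$. Dividing by $|\Omega(y_*)|^2 \neq 0$ gives $\mathsf{A}(y_*) = 1$.

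Two checks complete the argument. First, \eqref{eq:Omega:SS:Lp} holds pointwise at $y_*$. It comes from dotting \eqref{eq:profile:Omega} with $\Omega$: we have $\Omega\cdot(\Omega\cdot\nabla)U = |\Omega|^2\,\Xi^i\partial_iU^j\Xi^j = \mathsf{A}|\Omega|^2$, and $\Omega\cdot (V\cdot\nabla)\Omega = \tfrac12 V\cdot\nabla|\Omega|^2$. This is valid wherever $\Omega\neq0$, in particular at $y_*$, so $\mathsf{A}$ is well defined there. Second, the maximum must actually be attained, which is a hypothesis of the statement. (It is consistent with the decay \eqref{eq:ass:medium:rare}, which guarantees attainment whenever $\Omega\not\equiv0$.) There is no genuine obstacle; the only point requiring care is ensuring differentiability at $y_*$ and $\Omega(y_*)\ne0$, both of which are immediate.
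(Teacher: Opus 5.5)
Your proof is correct and is the same argument as the paper's: evaluate \eqref{eq:Omega:SS:Lp} at the maximum point, where the transport term vanishes because $\nabla|\Omega|^p(y_*)=0$, and divide by $|\Omega(y_*)|^p\neq 0$. One small quibble: $\nabla\Omega\in L^\infty$ gives only Lipschitz regularity, not $C^1$, so the differentiability you need at $y_*$ comes instead from $\Omega$ being a classical $C^1$ solution (e.g.\ $U\in C^2$).
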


By combining the bound \eqref{eq:stretching_SS_4} with Proposition~\ref{prop:A=1}, we have thus established:
\begin{theorem}
\label{thm:Omega:not:small}
Let $(U,\Omega)$ be any nontrivial solution of~\eqref{eq:profile:Omega} which satisfies the bound~\eqref{eq:ass:medium:rare} for some $\gamma >0$.
Then, the vorticity profile cannot be too small, in the sense $\Omega$ must obey the scaling invariant lower bound
\begin{equation}
\|\nabla \Omega\|_{L^\infty(\mathbb{R}^3)}^{\frac{3}{p+3}} \|\Omega\|_{L^p(\mathbb{R}^3)}^{\frac{p}{p+3}}
\geq 
 \mathsf{C}_p^{-1}, 
\label{eq:Omega:not:small}
\end{equation}
for any $p > \max\{3 \gamma,1\}$, where the constant $\mathsf{C}_p>0$ is as defined in~\eqref{eq:stretching_SS_4}.
\end{theorem}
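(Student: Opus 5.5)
The plan is to combine the pointwise upper bound \eqref{eq:stretching_SS_4} with Proposition~\ref{prop:A=1}, evaluated at a point where $|\Omega|$ is maximal. The first step is to show that a global maximum of $|\Omega|$ is actually attained. Since $(U,\Omega)$ is nontrivial, $|\Omega|\not\equiv 0$. By \eqref{eq:ass:medium:rare} we have $|\Omega(y)|\le \mathsf{C}_\flat\langle y\rangle^{-1/\gamma}\to 0$ as $|y|\to\infty$. Continuity of $\Omega$ (we assume $\nabla\Omega\in L^\infty$, otherwise the left side of \eqref{eq:Omega:not:small} is infinite and there is nothing to prove) then guarantees a point $y_*$ with $|\Omega(y_*)|=\max|\Omega|>0$.

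Next I would justify Proposition~\ref{prop:A=1} at $y_*$. Evaluate \eqref{eq:Omega:SS:Lp} there, say with $p=2$. Because $|\Omega|^2$ attains an interior maximum at $y_*$, its gradient vanishes, so the transport term $(\gamma y+U)\cdot\nabla|\Omega|^2$ drops out. This leaves $|\Omega(y_*)|^2=\mathsf{A}(y_*)|\Omega(y_*)|^2$, and dividing by $|\Omega(y_*)|^2>0$ gives $\mathsf{A}(y_*)=1$. Note that $\Xi$ is well defined near $y_*$ because $\Omega(y_*)\neq0$.

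Then fix $p>\max\{3\gamma,1\}$. The decay \eqref{eq:ass:medium:rare} gives $\Omega\in L^p$, since $\langle y\rangle^{-p/\gamma}$ is integrable exactly when $p/\gamma>3$. Using the cancellation $\Omega(y_*)\times\Xi(y_*)=0$, the representation \eqref{eq:stretching_SS} of $\mathsf{A}$ is valid at $y_*$. We split the integral with the cutoff $\chi(|z|/L)$. The far part is bounded by Hölder's inequality, as in \eqref{eq:stretching_SS_2p}. The near part is bounded by the Lipschitz estimate \eqref{eq:stretching_SS_3p}. Optimizing in $L$ yields \eqref{eq:stretching_SS_4} at $y=y_*$:
\[
1=\mathsf{A}(y_*)\le \mathsf{C}_p\|\nabla\Omega\|_{L^\infty}^{\frac{3}{p+3}}\|\Omega\|_{L^p}^{\frac{p}{p+3}}.
\]
Rearranging gives \eqref{eq:Omega:not:small}. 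Scaling invariance under \eqref{eq:Omega:scaling} is consistent with this, so the normalization \eqref{eq:U:scaling} plays no role.

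I expect the only delicate point to be the rigor of the Biot–Savart representation of $\mathsf{A}$ in \eqref{eq:stretching_SS} for profiles that merely satisfy \eqref{eq:ass:medium:rare}, rather than being compactly supported. Concretely, one must check that $\nabla U$ is the singular integral of $\Omega$, with no added linear or constant term. Here I would use the sublinearity \eqref{eq:U:non:negotiable} together with the decay of $\nabla U$: the difference between $U$ and the Biot–Savart velocity is harmonic, and $U$ is sublinear, so by Liouville that difference is constant, and the decay of $\nabla U$ then gives equality of the gradients. The convergence of the principal value integral at infinity follows from $\Omega\in L^p$ with $p>1$, since the kernel $|z|^{-3}$ lies in $L^{p'}$ away from the origin.
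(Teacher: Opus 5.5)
Your proposal is correct and is the paper's argument: dispose of the case $\nabla\Omega\notin L^\infty$, then evaluate the pointwise bound \eqref{eq:stretching_SS_4} at a maximizer $y_*$ of $|\Omega|$, where Proposition~\ref{prop:A=1} gives $\mathsf{A}(y_*)=1$. The details you add are sound and are taken for granted in the paper: that the maximum is attained by the decay in \eqref{eq:ass:medium:rare}, that $\Omega\in L^p$ precisely when $p>3\gamma$, and the Liouville justification of the Biot--Savart formula \eqref{eq:stretching_SS}. One caveat on the last of these: for $\gamma\ge 1$ the Biot--Savart integral for $U$ itself need not converge, so that step is better run at the level of $\nabla U$.
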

\begin{proof}[Proof of Theorem~\ref{thm:Omega:not:small}]
If $\nabla \Omega \not\in L^\infty(\mathbb{R}^3)$, there is nothing to prove; otherwise, since \eqref{eq:stretching_SS_4} must hold at ${y_{*}} = {\rm argmax}_{y\in \mathbb{R}^3} |\Omega|$, the bound \eqref{eq:Omega:not:small} follows from $\mathsf{A}({y_{*}}) = 1$, cf.~{Proposition}~\ref{prop:A=1}.
\end{proof}

\begin{remark}
Under the normalization~\eqref{eq:U:scaling}, {and} using the definition of $\mathsf{C}_p$, we obtain from~\eqref{eq:Omega:not:small} the requirement
\begin{equation}
\|\Omega\|_{L^p(\mathbb{R}^3)} 
\geq 2^{- 1 - \frac{3}{p}}
\bigl( \tfrac{\pi}{1296}\bigr)^{\frac{1}{p}}  
(p-1)^{-1 + \frac{1}{p}}  , 
\label{eq:Omega:not:small:normalized}
\end{equation}
whenever $p > \max\{3\gamma,1\}$.
\end{remark}

\subsection{Self-similar Lagrangian particle trajectories}
{It is convenient to denote the total transport velocity appearing in~\eqref{eq:profile:U} and~\eqref{eq:profile:Omega} as 
\be
V(y) := \gamma y + U(y).
\la{eq:V:def}
\ee
The self-similar Lagrangian trajectories are defined as solutions of the {autonomous} ODE
\begin{align}
\frac{d}{d\tau} Y(a,\tau) = V(Y(a,\tau))
\label{eq:Lagrangian:SS}
\end{align}
with $Y(a, 0) = a$.  Let $X = X(a,t)$ denote the Lagrangian particle trajectories associated to the velocity field $u = u(x,t)$; that is, $\frac{d}{dt} X(a,t) = u(X(a,t),t)$ for all $t \in (0,1)$, with initial datum $X(a,0) = a$. The self-similar Lagrangian particle trajectories are related to $X$ via
\be
Y(a,\tau) = \frac{X(a,t)}{(1-t)^\gamma} ,
\qquad
\mbox{with}
\qquad
\tau = - \log\left(1 - t \right) .
\la{YX}
\ee
We note that for $a=0$ we have $Y(0,\tau) = 0$ for all $\tau \in \mathbb{R}$; the origin is a fixed point of the dynamical system induced by~\eqref{eq:Lagrangian:SS}.

\subsubsection{The self-similar Cauchy formula and the Weber formula}
The Cauchy vorticity (vector) transport formula   $\omega(X(a,t),t) =  (\nabla_{a} X)(a,t)  \omega_0 (a)$ transformed into self-similar coordinates becomes
\begin{equation}
\Omega(Y(a,\tau)) = e^{-(\gamma+1)\tau}  (\nabla_{a}Y)(a,\tau) \Omega(a) ,
\label{eq:vorticity:transport}
\end{equation}
for all $a \in \mathbb{R}^3$ and all $\tau\in \Rr$.
The difference between~\eqref{eq:vorticity:transport} and the classical Cauchy formula is that $\mathrm{det}(\nabla_a Y)(a,\tau) = e^{3\gamma \tau}$ (instead of $=1$), because the vector field $\gamma y + U(y)$ is not divergence free; we have $ \nabla \cdot (\gamma y+U(y)) = 3\gamma$. 
The classical Weber formula for Euler equations is
\be
u^j(X(a,t),t)\pa_iX^j(a,t) - u^i(a,0) = \pa_i \pi(a,t)
\la{weber}
\ee
and is obtained by checking that $\pa_t((\na X(a,t))^T u(X(a,t),t))$ is a gradient. The self-similar Weber formula is
\be
e^{(1-2\gamma)\tau} U^j(Y(a,\tau))\pa_i Y^j(a,\tau) - U^i(a) = \pa_i q(a,\tau).
\la{web}
\ee
for an appropriate scalar function $q$. The proof of the Weber formula \eqref{web}  follows either from changing variables in \eqref{weber}, or directly in similarity variables from the observation stemming from \eqref{eq:profile:U}
that
\begin{equation*}
\pa_{\tau}( (\na Y)^TU(Y)) + (1-2\gamma) (\na Y)^TU(Y) + \na\left (P(Y) - \fr{|U(Y)|^2}{2}\right) =0
\end{equation*}
holds at each $a$. Thus,
\begin{equation*}
e^{(1-2\gamma)\tau} ((\na Y)^TU(Y))(a,\tau) - U(a) = -\na  \int_0^{\tau} e^{(1-2\gamma)s}\left (P(Y) - \fr{|U(Y)|^2}{2}\right)ds
\end{equation*}
is a gradient.

\subsubsection{The self-similar Kelvin circulation theorem}

Let $C(0) \subset \mathbb{R}^3$ be a simple closed curve parametrized as $C(0) = \{ a(\lambda) \colon \lambda \in \mathbb{T}\}$.
The circulation of $U$ on a loop $C(0)$ is the integral of the 1-form $U(y)dy$ on that loop, which is computed as
\be
\Gamma_{\rm{ss}}(0) = \oint_{C(0)} Udy = \int_{\mathbb T} U^j(a(\lambda))\dot{a}^j(\lambda)d\lambda,
\la{Gammazero}
\ee
where we have denoted $\dot{a}^j = \frac{d}{d\lambda} a^j$. We note that  $V$ (recall~\eqref{eq:V:def}) and $U$ have the same circulation on any loop because the 1-forms $V(y)dy$ and $U(y)dy$ differ by an exact 1-form, $d\fr{\gamma |y|^2}{2}$.

The self-similar Weber formula \eqref{web}  says that the pull back of the 1-form $e^{(1-2\gamma)\tau}U(y)dy$ under the diffeomorphism $Y(a,\tau)$ is the sum of the form $U(a)da$ and an exact 1-form, $dq$. The circulation of $U$ on $C(\tau)$ where $C(\tau) = Y(C(0),\tau)$ is  the  push forward (image) of the loop $C(0)$ is
\be
\Gamma_{\rm{ss}}(\tau) = \oint_{C(\tau)} U(y)dy.
\la{circss}
\ee
The self-similar Weber formula \eqref{web} implies
\be
e^{(1-2\gamma)\tau}\oint_{C(\tau)} U(y)dy = \oint_{C(0)} U(y)dy.
\la{circinv}
\ee
Indeed, multiplying \eqref{web} by $\dot{a}^i(\lambda)$ and integrating $d\lambda$ on $\mathbb T$  we have
\begin{align}
e^{(1-2\gamma)\tau}\Gamma_{\rm{ss}}(\tau) 
&= e^{(1-2\gamma)\tau}\oint_{C(\tau)}U(y)dy \notag\\
&= e^{(1-2\gamma)\tau}\int_{\mathbb{T}}  U^j(Y(a(\lambda),\tau)) \; (\partial_{i}Y^j)(a(\lambda),\tau)  \dot{a}^i(\lambda) d \lambda
\notag \\
&= \Gamma_{\rm{ss}}(0).
\label{eq:Kelvin:SS}
\end{align}

In original variables, the Weber formula  says that the pull back  of the 1-form $u(x,t)dx$ under the Lagrangian flow $X(a,t)$ is the sum of the time independent 1-form $u(a,0)da$ and an exact 1-form.  As a consequence, Kelvin's circulation theorem states that the circulation $\Gamma(t)$ around the material loop $C(t) = \{ X(a(\lambda),t) \colon \lambda \in \mathbb{T}\}$ is conserved. 
That is, we  have $\frac{d}{dt} \Gamma(t) = 0$, where 
\begin{equation*}
\Gamma(t) 
= \oint_{C(t)} u(x,t)dx
= \int_{\mathbb{T}} u^j(X(a(\lambda),t),t) (\partial_{i}X^j)(a(\lambda),t)  \dot{a}^i(\lambda) d \lambda .
\end{equation*}
Changing  to self-similar variables using \eqref{YX}  and time $\tau=-\log(1-t)$ we note that 
\[
\Gamma(t) = e^{(1-2\gamma)\tau}\Gamma_{\rm ss}(\tau)
.
\]

\begin{remark}
Because $e^{(1-2\gamma)\tau}\Gamma_{\rm ss}(\tau)$ is constant in time, if the integral $\int_{\mathbb{T}} (\ldots) d\lambda$ appearing in~\eqref{eq:Kelvin:SS} is nonzero, remains bounded, and  bounded away from zero as $\tau \to \infty$, then we must have $\gamma = 1/2$ in order to avoid a vanishing/blowing up exponential factor. Thus, the self-similar Kelvin circulation theorem identifies the similarity exponent $\gamma=1/2$ as being distinguished for 3D Euler, without knowledge of any viscous regularization. It also indicates that fixed points of~\eqref{eq:Lagrangian:SS}, and by extension points on the stable manifold of these fixed points, play an important role in the analysis. A result in this direction is offered in Theorem~\ref{thm:axissym:with:swirl} below.
\end{remark}

\subsubsection{{The self-similar Bernoulli function}}
Note that $\nabla \times V = \nabla \times U = \Omega$ since $\nabla \times y = 0$.
In~\eqref{eq:profile:U} we rewrite the first three terms as 
\begin{align*}
(1-\gamma) U +  (\gamma y + U) \cdot \nabla U 
&= (1-\gamma) (V - \gamma y) - \gamma V  + V \cdot \nabla V \\
&=- \gamma (1-\gamma) \nabla \frac{|y|^2}{2} 
+ (1- 2\gamma) V  
+ \Omega \times V 
+ \nabla \frac{|V|^2}{2}
.
\end{align*}
After rearranging, we obtain that~\eqref{eq:profile:U} is equivalent to
\begin{equation}
\label{eq:Bernoulli:SS}
(1-2\gamma)\, V + \Omega \times V + \nabla \mathcal{H} = 0,
\end{equation}
where the \emph{self-similar Bernoulli function}\footnote{See~\cite{NRS96} for use of the analogous Bernoulli function in the case of  3D incompressible Navier-Stokes.} is defined by
\begin{equation}
\label{eq:Bernoulli:def}
\mathcal{H}(y) := \frac{1}{2} |\gamma y + U(y)|^2 + P(y) + \frac{\gamma(\gamma-1)}{2} |y|^2 .
\end{equation}
Dotting~\eqref{eq:Bernoulli:SS} with $V$ yields the transport identity 
\begin{equation}
\label{eq:Bernoulli:transport}
V \cdot \nabla \mathcal{H} = (2\gamma - 1)|V|^2,
\end{equation}
which, restated along self-similar Lagrangian trajectories~\eqref{eq:Lagrangian:SS} is 
\begin{equation}
\label{eq:Bernoulli:transport:2}
\frac{d}{d\tau} \mathcal{H}(Y(a,\tau)) = (2\gamma-1) |V(Y(a,\tau))|^2
.
\end{equation}
Two immediate consequences follow. First, for $\gamma < 1/2$ the Bernoulli function is \emph{strictly decreasing} along any non-stationary trajectory:
\begin{equation}
\label{eq:Bernoulli:Lyapunov}
\gamma < \frac 12 
\qquad \Longrightarrow \qquad
\frac{d}{d\tau} \mathcal{H}(Y(a,\tau)) \leq 0,
\end{equation}
with equality if and only if $V(Y(a,\tau)) = 0$.
Second, since $|V(y)| \sim \gamma |y|$ for $|y| \gg 1$ (cf.~\eqref{eq:ass:medium:rare}) and for $\gamma <1$ we have $P(y) = \mathsf{o}(1)$ as $|y|\to \infty$,\footnote{A bound for the  pressure may be obtained as in~\cite{NRS96}. Taking the divergence of~\eqref{eq:profile:U} we have that $P = P_h + \mathcal{R}_i \mathcal{R}_j(U^i U^j)$, where $P_h$ is harmonic and $\mathcal{R}$ is the vector of Riesz-transforms. For $\gamma<1$, the second term in this expression decays to $0$ as $|y|\to \infty$. From~\eqref{eq:profile:U} and~\eqref{eq:ass:medium:rare} we also have that $|\nabla P|$ (and hence also $|\nabla P_h|$) decays to $0$ as $|y| \to \infty$ when $\gamma<1$, showing that $P_h$ is a constant. Since the pressure is defined only up to a constant, $P_h \equiv 0$. } the far-field behavior of the Bernoulli function is
\begin{equation}
\label{eq:Bernoulli:far:field}
\mathcal{H}(y) = \frac{\gamma(2\gamma-1)}{2} |y|^2 + \mathsf{o}(|y|^2)
\qquad \mbox{as } |y|\to\infty.
\end{equation}
In particular, for $\gamma < 1/2$ we have $\mathcal{H}(y) \to -\infty$ as $|y|\to \infty$.
}

\subsection{An outgoing property?}
\label{sec:outgoing?}
We note that all trajectories with sufficiently large labels escape to infinity. Indeed, the bound~\eqref{eq:ass:medium:rare} implies that for all $y \in \mathbb{R}^3$ such that  
\begin{equation}
\label{eq:R:flat:def}
|y| \geq R_\flat:= \max\left\{ 1, \Bigl(\frac{2\mathsf{C}_\flat}{\gamma}\Bigr)^\gamma \right\},
\end{equation}
we have $|U(y) \cdot y| \leq \fr{\gamma}{2} |y|^2$, and therefore $(\gamma y + U(y)) \cdot y \geq \frac{\gamma}{2} |y|^2$.   From~\eqref{eq:Lagrangian:SS} we then immediately obtain that if $|a| \geq  R_\flat$, then $|Y(a,\tau)|$ is a strictly increasing function of $\tau$, and we have the bounds
$|a| e^{\frac{\gamma}{2}\tau} 
\leq |Y(a,\tau)| \leq 
|a| e^{\frac{3\gamma}{2}\tau}$, and $
e^{\frac{\gamma}{2}\tau}
\leq |\nabla_a Y(a,\tau)| \leq 
e^{\frac{3\gamma}{2}\tau}$, 
for all $\tau\geq 0$. 

The exponential escape to infinity of self-similar Lagrangian flow maps (as exhibited above) is well-known to play a fundamental role, both in proving the existence and the stability of self-similar profiles. In the context of the compressible Euler equations, see~\cite{BSV23,MRRS22a,BCG25,CCSV24}.
In our incompressible 3D Euler setting we only know that trajectories $Y(a,\tau)$ for which $|a|$ is sufficiently large escape to infinity, and that the trajectory emanating from $a=0$ is frozen at the origin. 

A priori, we cannot rule out the existence of points $y \in \mathbb{R}^3 \setminus\{ 0 \}$ for which $V(y) = \gamma y + U(y) = 0$. 
In the compressible setting, such points exist---\textit{sonic points}---and they are well known to cause tremendous headaches, such as severe instabilities, even within the class of radially symmetric solutions~\cite{Guderley42,MRRS22a,BCG25}.
For incompressible 3D Euler equations, a 
\textit{global outgoing property}, quantified as a lower bound of the type 
\begin{equation}
\label{eq:outgoing:tarek}
V(y)\cdot y = (\gamma y + U(y) ) \cdot y \geq c_* |y|^2, 
\qquad
\mbox{for all }   y \in \mathbb{R}^3,
\qquad
\mbox{for some }
c_* > 0,
\end{equation}
has been recently identified by Elgindi~\cite{Elgindi25} as one of the two fundamental assumptions  needed to prove the existence of solutions to~\eqref{eq:profile:Omega}; see~\cite[Hypothesis 6.7]{Elgindi25}. Assumption~\eqref{eq:outgoing:tarek} implies that the only zero of $V$ occurs at $y=0$ (since $c_*>0$). In this section we want to allow for a generalized outgoing property, which allows for the degenerate case $c_* = 0$, allows the function $V$ to vanish away from the origin, and only makes assumptions that are local, near the nodal set of $V$.

\begin{definition}
\label{def:outgoing}
We say that the field $V(y) = \gamma y + U(y)$ satisfies the \emph{local outgoing property} if the nodal set
\[
\mathcal{N}_V := \{ y_* \in \mathbb{R}^3 \colon V(y_*)  = 0\}
\]
is finite,\footnote{As discussed below~\eqref{eq:R:flat:def}, the bound~\eqref{eq:ass:medium:rare} implies $\mathcal{N}_V \subset B_{R_\flat}(0)$. Hence, if we assume that $\mathcal{N}_V$ has \textit{no accumulation points}, then this set must automatically be finite.} and if there exists $c_* \geq 0$ and $\varepsilon_*>0$ such that for all $y_* \in \mathcal{N}_V$ we have
\begin{equation}
\label{eq:outgoing}
V(y) \cdot (y-y_*) \geq c_* |y-y_*|^2,
\qquad
\mbox{when}
\qquad |y-y_*|\leq \varepsilon_*.
\end{equation}
\end{definition}
A few remarks concerning Definition~\ref{def:outgoing} are in order:
\begin{itemize}[leftmargin=2em]
\item Note that $\mathcal{N}_V \neq \emptyset$ because $0 \in \mathcal{N}_V$ always. 
\item The global outgoing property~\eqref{eq:outgoing:tarek} implies the local outgoing property of Definition~\ref{def:outgoing}. Indeed, upon evaluating~\eqref{eq:outgoing:tarek}  at any  $y_* \in \mathcal{N}_V$ we deduce that $0 = V(y_*) \cdot y_* \geq c_* |y_*|^2$; this shows $y_*=0$ since $c_*>0$. Thus $\mathcal{N}_V = \{0\}$. Condition~\eqref{eq:outgoing} thus holds for any $\varepsilon_* >0$. 
\item When $\mathcal{N}_V = \{0\}$,~\eqref{eq:outgoing} generalizes~\eqref{eq:outgoing:tarek} not just in the local-in-$y$ nature of the inequality; when $c_*=0$ it also allows for $V(y) \cdot y = \gamma |y|^2 + y \cdot U(y)$ to vanish as $|y|^{2n}$ for $n\geq 2$ as $|y|\to 0$.
\end{itemize}

Our next result points out a significant consequence of assumption~\eqref{eq:outgoing}.
\begin{theorem}
\label{thm:outgoing:one:half}
Let $U$ be a $C^2$ smooth globally self-similar velocity profile for 3D incompressible Euler. If there exists $y_* \in \mathcal{N}_V$  such that $\Omega(y_*)\neq 0$, and if  the local outgoing property~\eqref{eq:outgoing} holds \emph{in an open neighborhood} of $y=y_*$ for some $c_* \geq0$, then
\[
\gamma \geq \frac 12 + c_*.
\]
\end{theorem}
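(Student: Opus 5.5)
The plan is to linearize everything at the stagnation point $y_*$. The vorticity equation~\eqref{eq:profile:Omega} will give an exact eigenvalue relation there, and the local outgoing property~\eqref{eq:outgoing} will give a lower bound on the symmetric part of $\nabla V(y_*)$. Incompressibility then forces the trace constraint that yields $\gamma\ge \frac12+c_*$.

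\emph{Step 1: eigenvalue relation at $y_*$.} Write~\eqref{eq:profile:Omega} as $\Omega + (V\cdot\nabla)\Omega = (\Omega\cdot\nabla)U$, using $V=\gamma y+U$. Since $U\in C^2$, $\Omega$ is $C^1$, so the equation holds pointwise. Evaluating at $y_*$, where $V(y_*)=0$, gives
\[
\Omega(y_*) = (\nabla U)(y_*)\,\Omega(y_*).
\]
With $\xi:=\Omega(y_*)/|\Omega(y_*)|$, which is well defined because $\Omega(y_*)\ne0$, we get $(\nabla U)(y_*)\xi=\xi$. Split $\nabla U = S_U + A_U$ into its symmetric and antisymmetric parts. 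The antisymmetric part acts as $A_U h = \tfrac12\,\Omega\times h$ (up to the sign convention), so $A_U(y_*)\xi=0$. Hence $S_U(y_*)\xi=\xi$: the vector $\xi$ is an eigenvector of the strain matrix $S_U(y_*)$ with eigenvalue $1$.

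\emph{Step 2: trace constraint.} Since $\nabla\cdot U=0$, $\operatorname{tr} S_U(y_*)=0$. Its eigenvalues are real because it is symmetric, and they are $1,\lambda_2,\lambda_3$ with $\lambda_2+\lambda_3=-1$. So some unit eigenvector $\eta$ satisfies $\eta\cdot S_U(y_*)\eta=\min(\lambda_2,\lambda_3)\le -\tfrac12$.

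\emph{Step 3: linearizing the outgoing property.} Set $M:=\nabla V(y_*)=\gamma I+\nabla U(y_*)$. Because $V\in C^1$ and $V(y_*)=0$, we have $V(y_*+h)=Mh+o(|h|)$. Insert this into~\eqref{eq:outgoing} with $y=y_*+s\eta$, divide by $s^2$, and let $s\to0^+$; this is legitimate since the property holds on a full neighborhood of $y_*$. Using that $h\cdot Mh$ only sees the symmetric part of $M$, we obtain
\[
c_* \le \eta\cdot M\eta = \gamma + \eta\cdot S_U(y_*)\eta \le \gamma-\tfrac12 .
\]
This gives $\gamma\ge\tfrac12+c_*$.

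\emph{Expected obstacle.} The only delicate points are regularity and bookkeeping rather than real difficulty. First, justify evaluating the profile equation pointwise at $y_*$, which needs $\Omega\in C^1$ (guaranteed by $U\in C^2$). Second, check the sign and normalization of the identity $A_U h=\tfrac12\Omega\times h$; only the fact $A_U\Omega=0$ is actually needed, and that holds regardless of convention. The remainder of the argument is finite-dimensional linear algebra.
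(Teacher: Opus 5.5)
Your proposal is correct and follows the paper's proof. You evaluate the vorticity equation at the stagnation point to see that $1$ is an eigenvalue of the traceless strain $\mathbb{S}_{y_*}$, then linearize the outgoing condition to get $\gamma+\hat z\cdot\mathbb{S}_{y_*}\hat z\ge c_*$. The only cosmetic difference is that you test this in the single eigendirection with eigenvalue $\le -\tfrac12$, whereas the paper bounds $\lambda_{\min}\ge c_*-\gamma$ in all directions and then uses the trace to get $1\le\lambda_{\max}\le 2(\gamma-c_*)$; the two are the same argument.
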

\begin{proof}[Proof of Theorem~\ref{thm:outgoing:one:half}]
We rewrite the self-similar vorticity equation~\eqref{eq:profile:Omega} as
\begin{equation}
\label{eq:Omega:SS}
\Omega + V \cdot \nabla \Omega = \mathbb{S} \Omega,
\end{equation}
where  $\mathbb{S} = \frac 12 ( (\nabla U) + (\nabla U)^\intercal)$ is the rate of strain matrix.  If $y_* \in \mathbb{R}^3$ is a zero of $V$, restricting~\eqref{eq:Omega:SS} to $y=y_*$,  we deduce that the matrix
\[
\mathbb{S}_{y_*} := \mathbb{S}(y_*)
\] 
has $1$ as an eigenvalue, with associated eigenvector $\Omega(y_*) \neq 0$. 
Now $\mathbb{S}_{y_*}$ is a real, symmetric, traceless matrix ($\nabla \cdot U = 0$) and we denote its  eigenvalues (which are real) as $1,\lambda_2, \lambda_3$, with $\lambda_2 + \lambda_3 = -1$.

Since $V(y_*) = 0$, for $|y-y_*|\ll 1$ we  have 
\begin{align*}
V(y) \cdot (y-y_*)
&=
(y-y_*) \otimes (y-y_*) \colon \! (\nabla V)(y_*) 
+
\mathcal{O}(|y-y_*|^3)
\notag\\
&= 
(y-y_*) \otimes (y-y_*) \colon \!  ( \gamma \mathrm{Id} + \mathbb{S}_{y_*} ) 
+ 
\mathcal{O}(|y-y_*|^3) .
\end{align*} 
Thus, if the outgoing condition~\eqref{eq:outgoing} holds in an open neighborhood of $y_*$, then for all $|y-y_*|\ll1$ we have 
$\gamma + \hat{z} \cdot \mathbb{S}_{y_*} \hat{z} + \mathcal{O}(|y-y_*|) \geq c_*$, where $\hat{z} = \frac{y-y_*}{|y-y_*|}$. Passing $y \to y_*$, we deduce that the smallest eigenvalue of the matrix $\mathbb{S}_{y_*}$, satisfies $\lambda_{\rm min}(\mathbb{S}_{y_*}) \geq c_* - \gamma$. Thus, the sum of the two-smallest eigenvalues of $\mathbb{S}_{y_*}$ is $\geq 2 (c_* - \gamma)$, and due to the zero trace property, the largest eigenvalue of the matrix $\mathbb{S}_{y_*}$ satisfies $\lambda_{\rm max}(\mathbb{S}_{y_*}) \leq 2(\gamma-c_*)$. We have thus shown that the eigenvalues of $\mathbb{S}_{y_*}$ satisfy
\[
[\lambda_{\rm min}(\mathbb{S}_{y_*}),\lambda_{\rm max}(\mathbb{S}_{y_*})]\subseteq [c_* - \gamma, 2(\gamma -c_*)].
\] 

In particular, because the eigenvalue $1$ lies in this interval,  we deduce $1 \leq 2 (\gamma-c_*)$. The claimed lower bound $\gamma \geq c_* + 1/2$ now follows.
\end{proof}

Our next result considers the complementary case, when the vorticity vanishes on $\mathcal{N}_V$. 

\begin{theorem}
\label{thm:outgoing:global:new}
Let $U$ be a nontrivial $C^2$ smooth globally self-similar velocity profile for 3D incompressible Euler equations satisfying~\eqref{eq:ass:medium:rare}, normalized with~\eqref{eq:U:scaling}. 
Assume that the local outgoing property of Definition~\ref{def:outgoing} holds. Then, 
\[
\gamma \geq \frac12.
\]
\end{theorem}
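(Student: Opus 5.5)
The plan is to argue by contradiction. Assume $\gamma<\tfrac12$, and show that along a backward self-similar Lagrangian trajectory the vorticity magnitude must grow exponentially, which is incompatible with the boundedness of $\Omega$ from~\eqref{eq:ass:medium:rare}. The argument does not need to split according to whether $\Omega$ vanishes on $\mathcal{N}_V$. The two inputs are the Lyapunov property~\eqref{eq:Bernoulli:Lyapunov} of the self-similar Bernoulli function, used to locate the $\alpha$-limit of trajectories, and the eigenvalue argument from the proof of Theorem~\ref{thm:outgoing:one:half}, used to bound the stretching near $\mathcal{N}_V$.

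\emph{Step 1 (backward trajectories are trapped).} Since $U$ is nontrivial, pick a label $a$ with $\Omega(a)\neq0$ and consider $Y(a,\tau)$ for $\tau\le 0$. By the discussion below~\eqref{eq:R:flat:def}, whenever $|Y|\ge R_\flat$ we have $\tfrac{d}{d\tau}|Y|^2\ge\gamma|Y|^2$. Hence, going backward, $|Y|$ decreases at an exponential rate until the trajectory enters $\overline{B_{R_\flat}}$. Once inside it can never leave backward: if it did, the forward flow from the exit point would have increasing $|Y|$ and could not return inside. So $Y(a,\tau)\in \overline{B_{R_\flat}}$ for all $\tau\le\tau_0$.

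\emph{Step 2 (the $\alpha$-limit is a point of $\mathcal{N}_V$).} On the compact ball, $\mathcal{H}$ is continuous and hence bounded. By~\eqref{eq:Bernoulli:transport:2}, $\tau\mapsto\mathcal{H}(Y(a,\tau))$ is monotone with derivative $(2\gamma-1)|V|^2$, so $\int_{-\infty}^{\tau_0}|V(Y(a,\tau))|^2\,d\tau<\infty$. Since $V$ is Lipschitz and $Y$ stays bounded, $\tau\mapsto |V(Y)|^2$ is uniformly continuous, and integrability forces $V(Y(a,\tau))\to0$ as $\tau\to-\infty$. Thus the $\alpha$-limit set lies in $\mathcal{N}_V$. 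That set is finite by Definition~\ref{def:outgoing}, and an $\alpha$-limit set is connected, so $Y(a,\tau)\to y_*$ for a single $y_*\in\mathcal{N}_V$.

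\emph{Step 3 (strain bound near $y_*$ and exponential growth).} Exactly as in the proof of Theorem~\ref{thm:outgoing:one:half}, the outgoing condition~\eqref{eq:outgoing} at $y_*$, combined with $\mathrm{tr}\,\mathbb{S}_{y_*}=0$, gives $\lambda_{\max}(\mathbb{S}_{y_*})\le 2(\gamma-c_*)\le 2\gamma<1$. This step does not need $\Omega(y_*)\neq0$. By continuity of $\nabla U$ (here $C^2$ is used), we get $\mathsf{A}\le\lambda_{\max}(\mathbb{S})\le \tfrac12+\gamma<1$ on a neighborhood of $y_*$ wherever $\Omega\neq0$. The case $p=1$ of~\eqref{eq:Omega:SS:Lp} along trajectories reads
\[
\tfrac{d}{d\tau}|\Omega(Y(a,\tau))|=(\mathsf{A}-1)|\Omega(Y(a,\tau))|.
\]
Equivalently, the Cauchy formula~\eqref{eq:vorticity:transport} shows that $\Omega$ never vanishes along the trajectory, since $\Omega(a)\neq0$. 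For $\tau\le\tau_1$ the trajectory lies in this neighborhood, so
\[
|\Omega(Y(a,\tau))|\ge|\Omega(Y(a,\tau_1))|\,e^{(\frac12-\gamma)(\tau_1-\tau)}\to\infty\quad\text{as }\tau\to-\infty.
\]
This contradicts $|\Omega|\le\mathsf{C}_\flat$. Hence $\gamma\ge\tfrac12$. The normalization~\eqref{eq:U:scaling} plays no essential role beyond guaranteeing that $V$ is globally Lipschitz, so that the flow is well defined.

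The main obstacle is Step 2, namely that backward trajectories really do converge to a point of $\mathcal{N}_V$. This requires three things: the boundedness of $\mathcal{H}$ on the trapping ball; the uniform-continuity argument that upgrades $\int|V|^2<\infty$ to $V(Y)\to0$; and the finiteness of $\mathcal{N}_V$, which turns ``limit points in $\mathcal{N}_V$'' into convergence to a single $y_*$, so that the local bound of Step 3 applies for all sufficiently negative times. I would check each of these carefully, in particular that the trajectory remains in the region where~\eqref{eq:outgoing} is assumed (at distance at most $\varepsilon_*$ from $y_*$) for all $\tau\le\tau_1$.
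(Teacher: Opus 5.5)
Your proof is correct, and its decisive step differs from the paper's.

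\textbf{Steps 1--2.} These follow the paper's idea: Bernoulli monotonicity on the trapping ball forces the backward orbit to settle near a single point of $\mathcal{N}_V$. The implementation differs. The paper bounds the measure of the set of times spent in $\mathcal{K}$ and counts crossings of the annuli $\mathcal{A}_\ell$, which gives only confinement in $\overline{B_{2\delta_*}(y_*)}$. You use uniform continuity of $|V(Y)|^2$ and connectedness of the $\alpha$-limit set, which gives actual convergence to $y_*$. Both work, and Step 3 only needs confinement.

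\textbf{Step 3.} This is where the routes genuinely differ.
\begin{itemize}
\item The paper first invokes Theorem~\ref{thm:outgoing:one:half} to get $\Omega(y_*)=0$. It then uses $\gamma\,\mathrm{Id}+\mathbb{S}_{y_*}\geq 0$ to show that $|\nabla_a Y|$ grows at most like $e^{2\delta_*(\tau_0-\tau)}$ backward. Finally it inverts the Cauchy formula~\eqref{eq:vorticity:transport}, using the cofactor bound and $\det \nabla_a Y = e^{3\gamma\tau}$. This yields the explicit decay $|\Omega(y_0)|\lesssim e^{(1-2\gamma-4\delta_*)\tau}$ as $\tau\to-\infty$. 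The normalization~\eqref{eq:U:scaling} is what makes the neighborhood sizes quantitative.
\item You use the same spectral information in dual form: $\lambda_{\max}(\mathbb{S}_{y_*})\leq 2\gamma<1$ via the trace. Combined with $\mathsf{A}\le\lambda_{\max}(\mathbb{S})$ and the scalar identity~\eqref{eq:Omega:SS:Lp} at $p=1$, this forces $|\Omega|$ to grow exponentially backward along an orbit confined near $y_*$, contradicting boundedness of $\Omega$.
\end{itemize}
Your route avoids all matrix estimates and handles $\Omega(y_*)\neq 0$ and $\Omega(y_*)=0$ in one stroke. It also needs only continuity of $\nabla U$ at $y_*$, so~\eqref{eq:U:scaling} is indeed dispensable. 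The paper's route gives an explicit rate and makes visible the role of the non-unit Jacobian $e^{3\gamma\tau}$.

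\textbf{Two small fixes.}
\begin{itemize}
\item Passing from ``$U$ nontrivial'' to ``$\Omega(a)\neq0$ for some $a$'' requires the Liouville step the paper uses at the end. A curl-free, divergence-free $U$ obeying~\eqref{eq:ass:medium:rare} with $U(0)=0$ vanishes identically; you should say so.
\item The global Lipschitz bound on $V$ comes from $|\nabla U|\le\mathsf{C}_\flat$ in~\eqref{eq:ass:medium:rare}, not from~\eqref{eq:U:scaling}, which controls $\nabla^2 U$.
\end{itemize}
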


\begin{proof}[Proof of Theorem~\ref{thm:outgoing:global:new}]
By contradiction, assume that $\gamma < 1/2$. 
We enumerate the nodal set of $V$ as $\mathcal{N}_V = \{ y_*^{(\ell)} \colon 0\leq \ell  \leq N\}$, for some $N\in \mathbb{N}$ and define (we use the convention $\min_{\emptyset} = + \infty$)
\begin{equation}
\delta_* := \frac16  \min\left( \frac 12 - \gamma , \min_{0\leq k\neq \ell \leq N} \bigl|y_*^{(\ell)} - y_*^{(k)}\bigr| \right) >0. 
\label{eq:delta:star:def}
\end{equation}
We also define the sets
\begin{equation*}
\mathcal{D} := \bigcup\limits_{\ell = 0}^{N} B_{\delta_*}\bigl(y_*^{(\ell)}\bigr),
\qquad
\mathcal{K}: = \mathcal{D}^\complement \cap \overline{B_{R_\flat}(0)},
\end{equation*}
where $R_\flat$ is as defined in~\eqref{eq:R:flat:def}. The set $\mathcal{K}$ is compact.

We claim that for all $y_0 \in \mathbb{R}^3$ there exists a unique $y_*=y_*(y_0) \in \mathcal{N}_V$ and a maximal time $\tau_0 = \tau_0(y_0)\leq 0$ such that the backwards-in-time flow map satisfies
\begin{equation}
Y(y_0,\tau) \in \overline{B_{2\delta_*}(y_*)}, \qquad \mbox{for all} \qquad \tau \leq  \tau_0.
\label{eq:backwards:to:check:new}
\end{equation}
In order to prove this claim, we first note that due to the discussion in the first paragraph of Section~\ref{sec:outgoing?}, there exists\footnote{This is because when $|y_0| > R_\flat$, we have that $|Y(y_0,\tau)| \leq |y_0| e^{\gamma \tau/2}$, as long as $|Y(y_0,\tau)| \geq R_\flat$. Thus, there exists a finite, sufficiently negative $\tau_0^\prime$ such that $|Y(y_0,\tau_0^\prime)| = R_\flat$. Once a backwards-in-time trajectory has entered the compact set $\overline{B_{R_\flat}(0)}$, it cannot leave it, by the very definition of $R_\flat$.} $\tau_0^\prime\leq 0$ such that $Y(y_0,\tau) \in \overline{B_{R_\flat}(0)}$ for all $\tau \leq \tau_0^\prime$. Define the set of times 
\[
J:= \{ \tau \in (-\infty,\tau_0^\prime] \colon Y(y_0,\tau) \in \mathcal{K} \}.
\]
Since all the zeros of $V(y) = \gamma y + U(y)$ lie in the interior of $\mathcal{D}$,  $V$ is continuous, and $\mathcal{K}$ is compact, there exists $c_\flat \in (0,1)$ such that 
\[
c_\flat \leq |V(y)| \leq c_\flat^{-1},
\qquad
\mbox{for all}
\qquad
y\in \mathcal{K}.
\]
Therefore, since $Y(y_0,\tau) \in \overline{B_{R_\flat}(0)}$ for all $\tau \leq \tau_0^\prime$, since the Bernoulli function $\mathcal{H}$ is continuous and $\gamma<1/2$, upon integrating~\eqref{eq:Bernoulli:transport:2} on $[\tau,\tau_0^\prime]$, by the definition of the set $J$ we obtain
\begin{align*}
(1-2\gamma) c_\flat^2 \bigl| J \cap [\tau,\tau_0^\prime] \bigr| 
&\leq  (1-2\gamma)  \int_{\tau}^{\tau_0^\prime} {\bf 1}_{\tau^\prime \in J} \bigl|V(Y(y_0,\tau^\prime))\bigr|^2 \, d\tau^\prime
\\
&\leq 
(1-2\gamma) \int_{\tau}^{\tau_0^\prime} \bigl|V(Y(y_0,\tau^\prime))\bigr|^2 \, d\tau^\prime
\\
&= \mathcal{H}(Y(y_0,\tau)) - \mathcal{H}(Y(y_0,\tau_0^\prime))
\leq \max_{\overline{B_{R_\flat}(0)}} \mathcal{H} - \min_{\overline{B_{R_\flat}(0)}} \mathcal{H} 
=: \mathsf{H}_\flat < \infty,
\end{align*}
for any $\tau \leq \tau_0^\prime$. Letting $\tau \to -\infty$, we deduce that the set $J$ has finite Lebesgue measure, with $|J| \leq \mathsf{H}_\flat c_\flat^{-2} (1-2\gamma)^{-1}$.
Thus, the trajectory $\{ Y(y_0,\tau) \}_{\tau \leq \tau_0^\prime}$ spends a finite amount of time in $\mathcal{K}$. Note however that this trajectory spends all time, i.e.~the entire interval $(-\infty,\tau_0^\prime]$, in $\overline{B_{R_\flat}(0)}$.

Next, for each $0 \leq \ell \leq N$ we consider the closed annulus $\mathcal{A}_\ell := \{ y \in \mathbb{R}^3 \colon \delta_* \leq |y-y_*^{(\ell)}| \leq 2\delta_* \}$. By the definition of $\delta_*$ in~\eqref{eq:delta:star:def}, for every $y \in \mathcal{A}_\ell$ and every $k \neq \ell$ we have $|y - y_*^{(k)}| \geq |y_*^{(k)} - y_*^{(\ell)}| - |y - y_*^{(\ell)}| \geq 6\delta_* - 2\delta_* = 4\delta_* > \delta_*$, and therefore $\mathcal{A}_\ell \subset \mathcal{D}^\complement$. In particular, if $\tau^\prime \leq \tau_0^\prime$ is such that $Y(y_0,\tau^\prime) \in \mathcal{A}_\ell$, then $\tau^\prime \in J$. Let us call a {\em crossing of $\mathcal{A}_\ell$} a time interval $[\tau_2,\tau_1] \subset J$ such that $Y(y_0,\tau^\prime) \in \mathcal{A}_\ell$ for all $\tau^\prime \in [\tau_2,\tau_1]$, and such that $\{ |Y(y_0,\tau_2) - y_*^{(\ell)}| , |Y(y_0,\tau_1) - y_*^{(\ell)}| \} = \{\delta_*, 2\delta_*\}$. During a crossing $[\tau_2,\tau_1]$, the trajectory travels a distance of at least $\delta_*$ while remaining in $\mathcal{K}$, and hence $\tau_1 - \tau_2 \geq \delta_* c_\flat$. Due to our previously established bound on $|J|$, the total number of possible crossings of the annuli $\{\mathcal{A}_\ell\}_{\ell=0}^{N}$ with pairwise disjoint interiors has cardinality at most $\mathsf{H}_\flat (1-2\gamma)^{-1} c_\flat^{-3} \delta_*^{-1}$. Therefore, there exists $\tau_0^{\prime\prime} \leq \tau_0^\prime$ such that no crossing of any annulus $\{\mathcal{A}_\ell\}_{\ell=0}^{N}$ is possible on the time interval $(-\infty,\tau_0^{\prime\prime}]$.

Since the trajectory $\{ Y(y_0,\tau) \}_{\tau \leq \tau_0^{\prime\prime}}$ only spends a finite amount of time in $\mathcal{K}$, there exists $\tau_0 \leq \tau_0^{\prime\prime}$ such that $Y(y_0,\tau_0) \in \mathcal{D}$; say $Y(y_0,\tau_0) \in B_{\delta_*}\bigl(y_*^{(\ell)}\bigr)$ for some $0 \leq \ell \leq N$. We let $y_* := y_*^{(\ell)}$ and claim that $Y(y_0,\tau) \in \overline{B_{2\delta_*}(y_*)}$ for all $\tau \leq \tau_0$. Otherwise, by the intermediate value theorem this continuous trajectory would need to cross the annulus $\mathcal{A}_\ell$ on a time interval $[\tau_2,\tau_1] \subset (-\infty,\tau_0] \subset (-\infty,\tau_0^{\prime\prime}]$, a contradiction. 

Finally, \eqref{eq:delta:star:def} implies that distinct points of $\mathcal{N}_V$ lie at distance at least $6\delta_*$ from one another, and hence the point $y_*$ is uniquely determined. Since $Y(y_0,\cdot)$ is continuous and $\overline{B_{2\delta_*}(y_*)}$ is closed, the supremum of the admissible times $\tau_0$ is attained, so that $\tau_0(y_0)$ may indeed be taken to be maximal. This proves~\eqref{eq:backwards:to:check:new}.

Next, we estimate the size of $|(\nabla_a Y)(y_0,\tau)|$ for all $\tau \leq\tau_0=\tau_0(y_0)$. Fix $y_*= y_*(y_0)$ as in~\eqref{eq:backwards:to:check:new}. 
Differentiating~\eqref{eq:Lagrangian:SS} with respect to labels, and then contracting with $\nabla_a Y$, we obtain
\begin{equation*}
\frac 12 \frac{d}{d\tau} |\nabla_a Y(y_0,\tau)|^2 
=  
\gamma |\nabla_a Y(y_0,\tau)|^2 
+
\partial_{a_j}Y^i(y_0,\tau) \; \partial_{a_j}Y^k(y_0,\tau) \; (\partial_k U^i)(Y(y_0,\tau)).
\end{equation*}
For $\tau \leq \tau_0$ we have $|Y(y_0,\tau) - y_*|\leq 2 \delta_*$, and thus by the mean value theorem and the normalization $\|\nabla^2 U\|_{L^\infty} = 1$ (cf.~\eqref{eq:U:scaling}) we deduce 
\[
\bigl| (\nabla U)(Y(y_0,\tau)) - (\nabla U)(y_*)\bigr| \leq 2 \delta_*, 
\qquad \mbox{for all} \qquad
\tau \leq \tau_0.
\]
Since we have assumed $\gamma < 1/2$, by Theorem~\ref{thm:outgoing:one:half}, for every $y_* \in \mathcal{N}_V$, we must have $\Omega(y_*) =0$, and hence  $\nabla U(y_*) =  \mathbb{S}_{y_*}$,
where we recall that $\mathbb{S}_{y_*} = \frac 12 ( (\nabla U)(y_*) + (\nabla U)^\intercal(y_*))$ is the symmetric part of $\nabla U$ evaluated at $y_*$. Moreover, as in the proof of Theorem~\ref{thm:outgoing:one:half}, from the local outgoing property at $y_*$ and $c_* \geq 0$ we deduce that $[\lambda_{\rm min}(\mathbb{S}_{y_*}),\lambda_{\rm max}(\mathbb{S}_{y_*})]\subseteq [c_* - \gamma, 2(\gamma -c_*)]\subseteq [-\gamma,2\gamma]$. Combining the above estimates, we deduce that 
\begin{align*}
\frac 12 \frac{d}{d\tau} |\nabla_a Y(y_0,\tau)|^2 
&\geq  
\gamma |\nabla_a Y(y_0,\tau)|^2 
+
\partial_{a_j}Y^i(y_0,\tau) \; \partial_{a_j}Y^k(y_0,\tau) \; \mathbb{S}_{y_*}^{ik}
- 
2 \delta_* |\nabla_a Y(y_0,\tau)|^2 
\\
&\geq 
- 
2 \delta_* |\nabla_a Y(y_0,\tau)|^2,
\end{align*}
for all $\tau \leq \tau_0$. Integrating this inequality in time, it follows that 
\begin{equation}
\label{eq:slow:growth:Lag:grad}
\bigl|(\nabla_a Y) (y_0,\tau)\bigr|
\leq 
\bigl|(\nabla_a Y) (y_0,\tau_0)\bigr| e^{2 \delta_*(\tau_0-\tau)},
\qquad\mbox{for all}\qquad
\tau\leq \tau_0.
\end{equation}

To conclude the proof, we appeal to the self-similar Cauchy formula~\eqref{eq:vorticity:transport}, which states that 
\[
e^{(\gamma+1)\tau} \Omega(Y(y_0,\tau)) =  (\nabla_a Y)(y_0,\tau) \Omega(y_0)
\]
for all $y_0 \in \mathbb{R}^3$ and all $\tau \in \mathbb{R}$. We multiply from the left with the inverse matrix $(\nabla_a Y)^{-1}(y_0,\tau)$ and deduce
\[
\Omega(y_0) 
= 
e^{(\gamma+1)\tau} (\nabla_a Y)^{-1}(y_0,\tau) \Omega(Y(y_0,\tau)) 
.
\]
Noting that $\mathrm{det}(\nabla_a Y)(y_0,\tau) = e^{3\gamma \tau}\neq 0$ (so that this matrix is indeed invertible), and using that for a $3\times3$ matrix the cofactor matrix is quadratic in the entries of the original matrix, we obtain that 
\[
\bigl|(\nabla_a Y)^{-1}(y_0,\tau)\bigr|
\leq 18 e^{-3\gamma \tau}\bigl|(\nabla_a Y)(y_0,\tau)\bigr|^2
\leq 18 e^{-3\gamma \tau}\bigl|(\nabla_a Y) (y_0,\tau_0)\bigr|^2 e^{4 \delta_*(\tau_0-\tau)}.
\]
Moreover, since $\Omega(y_*)=0$, by the mean value theorem and the bound $\|\nabla  \Omega\|_{L^\infty} \leq 2$ (which follows from the normalization~\eqref{eq:U:scaling}), we have 
\[
\bigl| \Omega(Y(y_0,\tau)) \bigr|
\leq 2 |Y(y_0,\tau)-y_*| \leq 4 \delta_*,
\]
for all $\tau \leq \tau_0$.
By combining the three displays above, we deduce that for all $\tau \leq \tau_0$, we have
\begin{align*}
|\Omega(y_0)|
&\leq e^{(\gamma+1)\tau} \cdot 18 e^{-3\gamma \tau}\bigl|(\nabla_a Y) (y_0,\tau_0)\bigr|^2 e^{4 \delta_*(\tau_0-\tau)} \cdot 4 \delta_*
\\
&= \Bigl( 72 \delta_* \bigl|(\nabla_a Y) (y_0,\tau_0)\bigr|^2 e^{4 \delta_*\tau_0} \Bigr) e^{(1-2\gamma-4\delta_*)\tau}.
\end{align*}
By the definition of $\delta_*$ in~\eqref{eq:delta:star:def}, we have that $1-2\gamma-4\delta_* \geq 8 \delta_* > 0$. Thus, upon passing $\tau \to -\infty$ in the above expression, we deduce that $\Omega(y_0) =0$. 

Since $y_0 \in \mathbb{R}^3$ was arbitrary, we deduce $\Omega \equiv 0$. The velocity profile $U$ is thus irrotational, incompressible, and decays at infinity (due to~\eqref{eq:ass:medium:rare}), a contradiction to the assumption that $U$ is nontrivial. Thus, $\gamma \geq 1/2$ must hold.
\end{proof}

\section{Axisymmetric global self-similarity}
\label{sec:axi:sym}
{For smooth initial conditions (for instance $C^1$ vorticity), whether the 3D Euler equations~\eqref{eq:Euler} exhibit axisymmetric singularities remains an open problem. As such, it is natural to revisit the discussion of hypothetical globally self-similar solutions from Section~\ref{sec:self-similar:Euler}, and make the additional assumption} that the  self-similar profile $U(y)$ is axisymmetric around the $\vec{e}_3$ axis. 

{The focus on axisymmetric profiles is also motivated by ongoing computational studies (see e.g.~\cite{PumirSiggia92,GuoLuo14,HouDeHuang22,Hou23,WLGZB23} and references therein), in which solutions $U$ of the full (stationary) self-similar Euler equation~\eqref{eq:profile:U} are sought  within this symmetry class, because this dramatically reduces the computational space domain: from $y \in \mathbb{R}^3$ to $(r,z) \in \mathbb{R}_+ \times \mathbb{R}$.} 

We adopt self-similar cylindrical coordinates $(r, z, \theta)$, where $r = \sqrt{y_1^2 + y_2^2}$ and $z = y_3$. The self-similar velocity profile $U$ can be written in this frame as
\[
U(y) = U_r(r, z) \vec{e}_r + U_\theta(r, z) \vec{e}_\theta + U_z(r, z) \vec{e}_z.
\]
The pressure is $P = P(r,z)$. With this notation, \eqref{eq:profile:U} becomes
\begin{equation}
\label{eq:profile:U:axisym}
\begin{aligned}
(1-\gamma) U_r + (\gamma r + U_r) \partial_r U_r + (\gamma z + U_z) \partial_z U_r  + \partial_r P 
&= \frac{1}{r} U_\theta^2, \\
(1-\gamma) U_z + (\gamma r + U_r) \partial_r U_z + (\gamma z + U_z) \partial_z U_z + \partial_z P
&=0 , \\
(1-\gamma) U_\theta + (\gamma r + U_r) \partial_r U_\theta + (\gamma z + U_z) \partial_z U_\theta
&=-\frac{1}{r} U_r U_\theta ,
\\
\partial_r U_r + \frac{1}{r} U_r + \partial_z U_z &=0.
\end{aligned}
\end{equation}
{
\begin{remark}
\label{rem:yes:swirl}
Because we consider the self-similar collapse of initially smooth solutions, the swirl component of velocity cannot vanish identically: $U_\theta \not \equiv 0$. 
\end{remark}
}

The vorticity vector $\Omega = \nabla \times U$ may be written in components as 
\[
\Omega(y) = \Omega_r(r, z) \vec{e}_r + \Omega_\theta(r, z) \vec{e}_\theta + \Omega_z(r, z) \vec{e}_z,
\]
where
\begin{align}
\Omega_r = - \partial_z U_\theta, 
\qquad
\Omega_z = \partial_r U_\theta + \tfrac{1}{r} U_\theta,
\qquad
\Omega_\theta = \partial_z U_r - \partial_r U_z. 
\label{eq:Omega:components:def}
\end{align}
With this notation, \eqref{eq:profile:Omega} becomes 
\begin{equation}
\label{eq:profile:Omega:axisym}
\begin{aligned}
\Omega_r + (\gamma r + U_r) \partial_r \Omega_r + (\gamma z + U_z) \partial_z \Omega_r    
&= \Omega_r \partial_r U_r + \Omega_z \partial_z U_r, \\
\Omega_z + (\gamma r + U_r) \partial_r \Omega_z + (\gamma z + U_z) \partial_z \Omega_z  
&= \Omega_r \partial_r U_z + \Omega_z \partial_z U_z  , \\
\Omega_\theta + (\gamma r + U_r) \partial_r \Omega_\theta + (\gamma z + U_z) \partial_z \Omega_\theta
&= \frac{1}{r} \bigl(U_r \Omega_\theta - 2 U_\theta \Omega_r \bigr).
\end{aligned}
\end{equation}

\begin{remark}
\label{rem:vanishing:on:axis}
Recall that if $\Omega(y)$ is assumed to be continuous, then on the axis of symmetry $\{r=0\}$ we have $\Omega_r (0,z) = \Omega_\theta(0,z) = 0$. The same reasoning applies for a continuous velocity: $U_r(0,z) = U_\theta(0,z) = 0$.  This leaves open the possibility that $\Omega_z(0,z)$ or   $U_z(0,z)$ do not vanish identically on the axis of symmetry.  
\end{remark}

\subsection{Lagrangian trajectories in the meridional  plane}
The ODE system~\eqref{eq:Lagrangian:SS} for the three-dimensional flow map $Y(a,\tau)$,  decomposes into three components:
\begin{equation}
\label{eq:r:z:dot}
\begin{aligned}
\frac{d}{d\tau} R(r,z,\tau) &= \gamma R(r,z,\tau)  + U_r(R(r,z,\tau),Z(r,z,\tau) ),  \\
\frac{d}{d\tau} Z(r,z,\tau) &= \gamma Z(r,z,\tau) + U_z(R(r,z,\tau), Z(r,z,\tau)), 
\end{aligned}
\end{equation}
and 
\begin{equation}
 \frac{d}{d\tau} \Theta(r,z,\tau) = \frac{1}{R(r,z,\tau)}  U_\theta(R(r,z,\tau), Z(r,z,\tau)),  
 \label{eq:theta:dot}
\end{equation}
with initial conditions $R(r,z,0) = r , Z(r,z,0) = z$, and $\Theta(r,z,0) = 0$. The dynamics of $R$ and $Z$ are decoupled from that of $\Theta$, reducing the 3D Lagrangian evolution to a 2D autonomous system \eqref{eq:r:z:dot}  in the meridional plane $(R, Z)$. The full 3D trajectory is this 2D path spun around the $\vec{e}_3$-axis according to~\eqref{eq:theta:dot}. 

\subsection{Fixed points of the meridional flow with nonzero swirl}
\label{sec:axi:Kelvin}

Here we explore constraints imposed by the existence of a fixed point $(r,z) \in \mathbb{R}_+ \times \mathbb{R}$ for the dynamics~\eqref{eq:r:z:dot} in the meridional plane; i.e., points such that 
\begin{equation}
\label{eq:axisym:fixed}
\gamma r  + U_r (r ,z ) = 0,
\qquad \mbox{and} \qquad 
\gamma z  + U_z(r ,z ) = 0.
\end{equation}
In this case, we automatically deduce $R(r ,z ,\tau) = r $, $Z(r ,z ,\tau) = z $, for all $\tau \in \mathbb{R}$, and $\Theta (r ,z ,\tau) = \frac{\tau}{r } U_\theta(r ,z ) \mod 2\pi$. If we additionally know that $U_\theta(r ,z ) \neq 0$,\footnote{The condition $U_\theta(r ,z ) \neq 0$ implies $r>0$, due to Remark~\ref{rem:vanishing:on:axis}.} we have thus obtained a periodic orbit for the full dynamics of the Lagrangian flow map $Y(\cdot,\tau)$ with nonzero circulation. This leads to the following result:
\begin{theorem}
\label{thm:axissym:with:swirl}
Assume that $U$ is a $C^2$ smooth, axisymmetric, globally self-similar velocity profile for 3D incompressible Euler equations. Assume there exists a fixed point $(r,z)$ for the dynamics in the meridional plane, i.e.~\eqref{eq:axisym:fixed} holds.  If $U_\theta(r,z) \neq 0$, then the similarity exponent satisfies $\gamma = \tfrac 12$.
\end{theorem}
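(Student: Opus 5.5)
We apply the self-similar Kelvin circulation theorem \eqref{circinv} to the periodic orbit constructed above. Let $(r,z)$ satisfy \eqref{eq:axisym:fixed} with $U_\theta(r,z)\neq 0$; by Remark~\ref{rem:vanishing:on:axis} this forces $r>0$. Take as $C(0)$ the horizontal circle of radius $r$ at height $z$, parametrized by $a(\lambda) = (r\cos\lambda, r\sin\lambda, z)$, $\lambda\in\mathbb{T}$. Since $(r,z)$ is a fixed point of the meridional system \eqref{eq:r:z:dot}, each point of $C(0)$ keeps its values of $R$ and $Z$ for all $\tau$. By \eqref{eq:theta:dot} it only rotates by the angle $\Theta = \tau U_\theta(r,z)/r$, which is the same for every point of the circle. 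Hence $C(\tau) = Y(C(0),\tau)$ is the same circle, reparametrized by a rigid rotation, and it keeps its orientation.

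Next we compute the circulation. On this circle $dy$ is tangent to $\vec e_\theta$, so by axisymmetry
\[
\oint_{C(\tau)} U\,dy = \int_{\mathbb{T}} U_\theta(r,z)\, r\, d\lambda = 2\pi r\, U_\theta(r,z) =: \Gamma_0 \neq 0
\]
for every $\tau\in\mathbb{R}$. Note that only the $\vec e_\theta$ component of $U$ contributes, and that the rotation of the parametrization changes nothing because the integrand is constant along the circle. Inserting this into \eqref{circinv} gives $e^{(1-2\gamma)\tau}\Gamma_0 = \Gamma_0$ for all $\tau$. Since $\Gamma_0\neq 0$, we conclude $e^{(1-2\gamma)\tau}=1$ for all $\tau$, and therefore $\gamma = 1/2$.

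The only point needing care is the justification of \eqref{circinv}, i.e.\ of the self-similar Weber formula \eqref{web}, for a $C^2$ profile. The flow map $Y$ is $C^1$ in $a$ because $V$ is $C^2$. The pressure $P$ is $C^1$, since $\nabla P$ is given by \eqref{eq:profile:U} in terms of $C^1$ quantities. Thus the identity $\partial_\tau((\nabla Y)^T U(Y)) + (1-2\gamma)(\nabla Y)^T U(Y) + \nabla_a(P(Y)-|U(Y)|^2/2)=0$ holds classically. Integrating a gradient around a closed loop gives zero, so \eqref{circinv} follows.

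As a cross-check that avoids the Weber formula, one can evaluate the $U_\theta$ equation in \eqref{eq:profile:U:axisym} at the fixed point. There the transport terms vanish and $U_r = -\gamma r$, so
\[
(1-\gamma)U_\theta = -\tfrac{1}{r}U_r U_\theta = \gamma U_\theta .
\]
Since $U_\theta(r,z)\neq 0$, this gives $1-\gamma=\gamma$, i.e.\ $\gamma=1/2$. This is the local form of the same circulation identity, and it makes the regularity concern essentially moot; I would present it as the main argument, with the Kelvin interpretation as motivation.
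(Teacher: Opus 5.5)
Your proposal is correct. The Kelvin-circulation argument matches the paper's proof: invariant horizontal circle through the fixed point, circulation $2\pi r\,U_\theta(r,z)$, and $\gamma=1/2$ read off from \eqref{eq:Kelvin:SS}.

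The argument you would promote to the main one takes a genuinely different and more elementary route. You evaluate the swirl equation in \eqref{eq:profile:U:axisym} at the fixed point, where it gives $(1-2\gamma)U_\theta(r,z)=0$. This is exactly the identity $V\cdot\nabla(rU_\theta)+(1-2\gamma)\,rU_\theta=0$ that the paper writes down later, in the proof of Theorem~\ref{thm:axisym:final}, now evaluated at a zero of the meridional field. It needs no flow map, no Weber formula and no pressure. It only uses that the swirl equation holds pointwise at a point with $r>0$, so $C^1$ regularity of $U$ would suffice. That makes your regularity paragraph for \eqref{circinv} unnecessary on this route, though that paragraph is itself fine.

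The paper's Kelvin route buys mechanism and generality in exchange. It shows the constraint comes from conservation of circulation on an invariant material loop. The same computation then forces $\gamma=1/2$ for any loop that is invariant under the self-similar Lagrangian flow and carries nonzero circulation, not just for circles of meridional fixed points. This is the viewpoint the paper wants to stress, since circulation is the invariant with the dimensions of kinematic viscosity that singles out $\gamma=1/2$.
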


\begin{remark}
If $(r,z)$ is a solution of~\eqref{eq:axisym:fixed} with $U_\theta(r,z) =0$, then also $\Omega_\theta(r,z)=0$. This follows from the swirl component of the vorticity equation~\eqref{eq:profile:Omega:axisym}.
\end{remark}

\begin{proof}[Proof of Theorem~\ref{thm:axissym:with:swirl}]
We consider the loop $C(0)$ parameterized by $ (r,z, \theta )$, with $\theta \in [0,2\pi)$. Due to~\eqref{eq:axisym:fixed}, this loop is an invariant set of the flow: $C(\tau) = C(0)$ because the Lagrangian flow~\eqref{eq:r:z:dot}--\eqref{eq:theta:dot} merely advects points along the loop.  Combining~\eqref{eq:Kelvin:SS} and~\eqref{eq:axisym:fixed},  after a short computation we deduce that 
\[
\Gamma_{\rm ss}(0) = e^{(1-2\gamma)\tau}\Gamma_{\rm ss}(\tau) = e^{(1-2\gamma ) \tau} \int_{\mathbb{T}} U_\theta(r,z) r d\lambda = e^{(1-2\gamma ) \tau} \cdot 2\pi r U_\theta(r,z).
\]
Since we have assumed $ U_\theta(r ,z )\neq 0$ we must have that $r>0$, and we deduce that $\gamma = 1/2$ is the only exponent which conserves the circulation along this loop.
\end{proof}

\subsection{No assumptions on the meridional flow}
The main result of this section shows that  solutions of~\eqref{eq:profile:U:axisym} which have $C^2$ smooth velocity profiles automatically must satisfy $\gamma \geq 1/2$, with \emph{no further assumptions} on the meridional flow.

\begin{theorem}
\label{thm:axisym:final}
Let $U$ be a nontrivial $C^2$ smooth axisymmetric globally self-similar velocity profile for the 3D incompressible Euler equations~\eqref{eq:profile:U:axisym}, satisfying~\eqref{eq:U:non:negotiable} and~\eqref{eq:ass:medium:rare}.  Then $\gamma \geq 1/2$. 
\end{theorem}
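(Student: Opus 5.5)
The plan is to argue by contradiction: assume $\gamma<1/2$ and show that the profile must be trivial. The key tool is two scalar quantities that are transported with \emph{exponential weights} along the self-similar Lagrangian flow~\eqref{eq:Lagrangian:SS}. We combine this with the backward-confinement fact from Section~\ref{sec:outgoing?}: by~\eqref{eq:ass:medium:rare} and the definition~\eqref{eq:R:flat:def} of $R_\flat$, every backward trajectory $\{Y(y_0,\tau)\}_{\tau\le \tau_0'}$ eventually enters $\overline{B_{R_\flat}(0)}$ and stays there for all earlier times. This argument uses no outgoing property and no information about fixed points of the meridional flow. Since $U\in C^2$, the field $V$ is Lipschitz, so trajectories are unique and global. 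The axis $\{r=0\}$ is invariant, because $\gamma r+U_r$ vanishes there by Remark~\ref{rem:vanishing:on:axis}.

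\textbf{Step 1 (the swirl must vanish).} Set $\Phi:=rU_\theta$, the self-similar circulation density. Multiplying the third equation in~\eqref{eq:profile:U:axisym} by $r$ gives
\[
V\cdot\nabla\Phi=(2\gamma-1)\Phi .
\]
The $U_rU_\theta$ terms cancel against $(\gamma r+U_r)\partial_r r=\gamma r+U_r$. Along trajectories this reads $\frac{d}{d\tau}\Phi(Y(y_0,\tau))=(2\gamma-1)\Phi(Y(y_0,\tau))$, and hence
\[
\Phi(y_0)=e^{(1-2\gamma)\tau}\,\Phi(Y(y_0,\tau)) .
\]
This is the pointwise version of the self-similar Kelvin theorem~\eqref{eq:Kelvin:SS}. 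For $\tau\le\tau_0'$ the point $Y(y_0,\tau)$ lies in the compact ball $\overline{B_{R_\flat}(0)}$, on which the continuous function $\Phi$ is bounded. If $\gamma<1/2$, letting $\tau\to-\infty$ gives $\Phi(y_0)=0$. Hence $U_\theta\equiv0$.

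\textbf{Step 2 (the swirl-free profile is trivial).} With $U_\theta\equiv0$ we have $\Omega=\Omega_\theta\vec e_\theta$. Set $\xi:=\Omega_\theta/r$. The third equation in~\eqref{eq:profile:Omega:axisym} becomes $\Omega_\theta+V\cdot\nabla\Omega_\theta=U_r\Omega_\theta/r$, and a short computation, again using $(\gamma r+U_r)\partial_r r^{-1}$, gives
\[
V\cdot\nabla\xi=-(1+\gamma)\xi ,\qquad\text{so}\qquad \xi(y_0)=e^{(1+\gamma)\tau}\,\xi(Y(y_0,\tau)).
\]
For this we need $\xi$ bounded on $\overline{B_{R_\flat}(0)}$. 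This follows because $U\in C^2$ makes $\Omega_\theta$ a $C^1$ function that vanishes on the axis (Remark~\ref{rem:vanishing:on:axis}), so $|\Omega_\theta|\le \|\nabla\Omega\|_{L^\infty(B_{R_\flat})}\,r$ there. Sending $\tau\to-\infty$ yields $\xi\equiv0$, hence $\Omega\equiv0$. This step works for every $\gamma>0$. Alternatively, it can be bypassed via Remark~\ref{rem:yes:swirl}.

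\textbf{Conclusion and main obstacle.} Step 2 shows that $U$ is curl-free, divergence-free, and decays by~\eqref{eq:ass:medium:rare}. Therefore each component is a bounded harmonic function tending to zero at infinity, so $U\equiv0$ by Liouville. This contradicts nontriviality, and hence $\gamma\ge 1/2$. The step needing the most care is the uniform backward confinement to $\overline{B_{R_\flat}(0)}$ together with boundedness of $\Phi$ and $\xi$ near the axis. The first follows from $(\gamma y+U)\cdot y\ge\frac\gamma2|y|^2$ for $|y|\ge R_\flat$, exactly as in the footnote in the proof of Theorem~\ref{thm:outgoing:global:new}. The second is where the $C^2$ hypothesis on $U$ enters.
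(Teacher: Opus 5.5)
Your proposal is correct and follows the paper's proof essentially step for step: transporting $rU_\theta$ with weight $e^{(1-2\gamma)\tau}$ along backward-confined trajectories kills the swirl, transporting $\Omega_\theta/r$ with weight $e^{(1+\gamma)\tau}$ then kills the remaining vorticity, and the resulting irrotational, divergence-free, decaying field must vanish. The only differences are cosmetic. You use the 3D flow and the ball $\overline{B_{R_\flat}(0)}$ where the paper uses the meridional flow and the set $K_0$. Your optional shortcut through Remark~\ref{rem:yes:swirl} should not be relied on, because that remark is not among the theorem's hypotheses; your Step 2, like the paper, handles the swirl-free case directly.
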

\begin{proof}[Proof of Theorem~\ref{thm:axisym:final}]
Throughout this proof we work with trajectories of the meridional flow; that is, we analyze the solution $(R(\tau),Z(\tau)) = (R(r_0,z_0,\tau),Z(r_0,z_0,\tau))_{\tau \in \mathbb{R}}$ of the ODE~\eqref{eq:r:z:dot}, with initial condition $(r_0,z_0)$ at $\tau=0$. 

Since $U_r(0,z)=0$ for all $z \in \mathbb{R}$ (see~Remark~\ref{rem:vanishing:on:axis}) and the ODE~\eqref{eq:r:z:dot} is driven by a locally Lipschitz autonomous vector field, the axis of symmetry $\{r=0\}$ is invariant under the meridional flow. Moreover, a trajectory originating at $(r_0,z_0)$ with $r_0>0$ cannot touch the axis $\{r=0\}$ in finite time; if it did so, it would have to coincide with a trajectory confined to the axis for all time.

Recall from~\eqref{eq:R:flat:def} that if $r^2+ z^2 \geq R_\flat^2$, then   $|r U_r(r,z) + z U_z(r,z)| \leq \frac{\gamma}{2} (r^2 + z^2)$. This bound implies that for any fixed $(r_0,z_0) \in \mathbb{R}_+\times \mathbb{R}$, the trajectory $(R(\tau),Z(\tau)) = (R(r_0,z_0,\tau),Z(r_0,z_0,\tau))$ satisfies 
\begin{equation}
\label{eq:backwards:in:time:remains:in:compact}
R(\tau)^2 + Z(\tau)^2 \leq \max\{ r_0^2+ z_0^2 , R_\flat^2\}, 
\qquad
\mbox{for all}
\qquad
\tau \leq 0. 
\end{equation}
In order to prove~\eqref{eq:backwards:in:time:remains:in:compact}, note that by~\eqref{eq:r:z:dot} we have that the function $\rho(\tau)= \sqrt{R(\tau)^2 + Z(\tau)^2}$, satisfies $\frac 12 \frac{d}{d\tau} \rho^2 = \gamma \rho^2 + (r U_r + z U_z)(R(\tau),Z(\tau))$. Thus, if $\rho  \geq  R_\flat$, then $ \frac{d}{d\tau} \rho^2 \geq \gamma \rho^2 \geq \gamma R_\flat^2>0$, which proves~\eqref{eq:backwards:in:time:remains:in:compact} via a standard continuity argument.

Combining the two statements above, we have thus established that for any $r_0>0$ and $z_0 \in \mathbb{R}$, the backwards-in-time trajectory $(R(\tau),Z(\tau))=(R(r_0,z_0,\tau),Z(r_0,z_0,\tau))_{\tau \leq 0}$ is confined  to the compact set $K_0 := \{ (r,z) \in \mathbb{R}_+ \times \mathbb{R} \colon r^2 + z^2 \leq \max\{ r_0^2 + z_0^2, R_\flat^2\} \}$, and $R(r_0,z_0,\tau)>0$ for all $\tau \leq 0$.

For the remainder of the proof, assume \emph{by contradiction} that $\gamma < 1/2$. 

We note that the $U_\theta$ equation in~\eqref{eq:profile:U:axisym} gives 
\[
(\gamma r + U_r) \partial_r (r U_\theta) 
+ 
(\gamma z + U_z) \partial_z (r U_\theta)
+
(1-2\gamma) (r U_\theta)
=
0.
\]
For a fixed $(r_0,z_0)\in\mathbb{R}_+\times \mathbb{R}$ with $r_0>0$, we integrate this equation along the backwards trajectory $( R(\tau),Z(\tau) )_{\tau \leq 0}$ to obtain
\[
e^{(1-2\gamma) \tau} R(\tau) U_\theta(R(\tau),Z(\tau)) 
=
r_0 U_\theta(r_0,z_0),
\qquad
\mbox{for all}
\qquad
\tau\leq 0.
\]
Since $\gamma<1/2$, we have $e^{(1-2\gamma) \tau}  \to 0$ as $\tau\to - \infty$. Moreover, we have previously shown that  $(R(\tau),Z(\tau)) \in K_0$ for all $\tau\leq 0$, and the continuous function $rU_\theta(r,z)$ is bounded on the compact set $K_0$. Therefore, upon passing $\tau \to -\infty$ in the above display, and using that $r_0>0$, we deduce that $U_\theta(r_0,z_0)=0$; this fact is a manifestation of the conservation of circulation~\eqref{circinv}. 
With Remark~\ref{rem:vanishing:on:axis}, it follows that $U_\theta \equiv 0$ on $\mathbb{R}_+\times\mathbb{R}$. Since $U$ is $C^1$ smooth, from~\eqref{eq:Omega:components:def} we also obtain $\Omega_r \equiv \Omega_z \equiv 0$ on $\mathbb{R}_+\times\mathbb{R}$.

To conclude, we use the $\Omega_\theta$ equation in~\eqref{eq:profile:Omega:axisym} and the fact that $U_\theta\equiv0$ to deduce
\[
(\gamma r + U_r) \partial_r \bigl(\tfrac{1}{r} \Omega_\theta\bigr)
+ 
(\gamma z + U_z) \partial_z \bigl(\tfrac{1}{r} \Omega_\theta\bigr)
+
(1+\gamma) \bigl(\tfrac{1}{r} \Omega_\theta\bigr)
=
0.
\]
For a fixed $(r_0,z_0)\in\mathbb{R}_+\times \mathbb{R}$ with $r_0>0$, we integrate this equation along the backwards trajectory $( R(\tau),Z(\tau) )_{\tau \leq 0}$ to obtain
\[
e^{(1+\gamma) \tau} \tfrac{1}{R(\tau)} \Omega_\theta(R(\tau),Z(\tau)) 
=
\tfrac{1}{r_0} \Omega_\theta(r_0,z_0).
\]
Since $\gamma>0$, we have $e^{(1+\gamma) \tau}  \to 0$ as $\tau\to - \infty$.  Moreover, we have previously shown that  $(R(\tau),Z(\tau)) \in K_0$ for all $\tau \leq 0$, and  the function $\frac{1}{r} \Omega_\theta(r,z)$ is bounded on the compact set $K_0$.\footnote{Since $\Omega$ is $C^1$ smooth and $\Omega_\theta(0,z) = 0$, we have that $\frac{1}{r} \Omega_\theta(r,z) = \frac{1}{r} \int_0^r (\partial_r \Omega_\theta)(r^\prime,z) dr^\prime$. For any $(r,z) \in K_0$, the whole line segment $(r^\prime,z)_{0\leq r^\prime\leq r}$ lies in $K_0$, and therefore $\frac{1}{r} |\Omega_\theta(r,z)| \leq \sup_{K_0}|\partial_r \Omega_\theta| < \infty$.}
Upon passing $\tau\to -\infty$ in the above display and using that $r_0>0$, we  deduce $\Omega_\theta(r_0,z_0)=0$. 
With Remark~\ref{rem:vanishing:on:axis}, we also know that $\Omega_\theta$ vanishes on the axis of symmetry, and hence  $\Omega_\theta \equiv 0$ on $\mathbb{R}_+ \times \mathbb{R}$. 

With $\Omega_r\equiv\Omega_z\equiv\Omega_\theta \equiv 0$  on $\mathbb{R}_+ \times \mathbb{R}$, by axisymmetry and continuity we deduce that $U$ is irrotational and incompressible on $\mathbb{R}^3$. Note however that~\eqref{eq:ass:medium:rare} implies $|\nabla U(y)| \to 0$ as $|y|\to \infty$ for any $\gamma>0$, and thus $U$ must be a constant. Our normalization $U(0)=0$ (cf.~\eqref{eq:U:non:negotiable}) then proves $U$ is trivial, which contradicts the assumption that $U$ is nontrivial. Hence, the contradiction ansatz fails, and we must have $\gamma \geq 1/2$.
\end{proof}

\begin{remark}
Inspecting the proof of Theorem~\ref{thm:axisym:final}, we note that the contradiction ansatz $\gamma < 1/2$ was used \emph{only once}: we appealed to $\gamma < 1/2$ in the form $e^{(1-2\gamma)\tau} \to 0$ as $\tau\to -\infty$, in order to conclude $U_\theta \equiv 0$. The remainder of the proof only uses $\gamma>0$. Thus, we have proven that there is \emph{no swirl-free},  $C^2$ smooth, axisymmetric, globally self-similar velocity profile $U$ for the 3D incompressible Euler equations~\eqref{eq:profile:U:axisym}, satisfying~\eqref{eq:U:non:negotiable} and~\eqref{eq:ass:medium:rare} for \emph{any} $\gamma>0$. 
\end{remark}

\section*{Acknowledgements}  
The work of P.C. was partially supported by NSF grant DMS-2106528 and by a Simons Collaboration
Grant 601960. The work of M.I. was partially supported by NSF grant DMS-2204614. The work of V.V. was partially supported by the Collaborative NSF grant DMS-2307681
and a Simons Investigator Award.


\begin{thebibliography}{99}


\bibitem{Barenblatt96} G.I.~Barenblatt.
\newblock Scaling, self-similarity, and intermediate asymptotics: dimensional analysis and intermediate asymptotics. 
\newblock No. 14. Cambridge University Press, 1996.

\bibitem{BKM84}
J.T.~Beale, T.~Kato, A.~Majda. 
\newblock Remarks on the breakdown of smooth solutions for the 3-D Euler equations. 
\newblock \textit{Communications in Mathematical Physics}~\textbf{94} (1):61--66, 1984.

\bibitem{BronziShvydkoy15}
A.~Bronzi, R.~Shvydkoy. 
\newblock On the energy behavior of locally self-similar blowup for the Euler equation.
\newblock \textit{Indiana University Mathematics Journal}~\textbf{64} (5):1291--1302, 2015.

\bibitem{BCG25}
T.~Buckmaster, G.~Cao-Labora,  J.~G{\'o}mez-Serrano.
\newblock Smooth imploding solutions for 3d compressible fluids.
\newblock \textit{Forum of Mathematics, Pi}, 13:e6, 2025.

\bibitem{BSV22}
T.~Buckmaster, S.~Shkoller, V.~Vicol.
\newblock Formation of shocks for 2D isentropic compressible Euler. 
\newblock \textit{Comm. Pure Appl. Math.}~\textbf{75} (9):2069--2120, 2022

\bibitem{BSV23}
T.~Buckmaster, S.~Shkoller, V.~Vicol.
\newblock Formation of point shocks for {3D} compressible {E}uler. 
\newblock \textit{Comm. Pure Appl. Math.}~\textbf{76} (9):2073--2191, 2023.

\bibitem{Chae07}
D.~Chae. 
\newblock Nonexistence of self-similar singularities for the 3D incompressible Euler equations.
\newblock \textit{Communications in Mathematical Physics}~\textbf{273} (1):203--215, 2007.


\bibitem{ChaeWolf17}
D.~Chae, J.~Wolf.
\newblock Removing discretely self-similar singularities for the 3D Navier-Stokes equations.
\newblock \textit{Comm. Partial Differential Equations}~\textbf{42} (9):1359--1374, 2017.

\bibitem{Chen24}
J.~Chen. 
\newblock{Remarks on the smoothness of the $C^{1,\alpha}$ asymptotically self-similar singularity in the 3D Euler and 2D Boussinesq equations}. 
\newblock \textit{Nonlinearity}~\textbf{37}, no. 6:065018, 2024.

\bibitem{CCSV24}
J.~Chen, G.~Cialdea, S.~Shkoller, V.~Vicol. 
\newblock Vorticity blowup in 2D compressible Euler equations.
\newblock \textit{Duke Math. J.}, to appear. \textit{arXiv preprint} \href{https://arxiv.org/abs/2407.06455}{arXiv:2407.06455}, 2024.

\bibitem{ChenHou21}
J.~Chen, T.Y.~Hou. 
\newblock Finite time blowup of 2D Boussinesq and 3D Euler equations with $C^{1,\alpha}$ velocity and boundary. 
\newblock \textit{Communications in Mathematical Physics}~\textbf{383} (3):1559--1667, 2021.

\bibitem{ChenHou22}
J.~Chen, T.Y.~Hou. 
\newblock Stable nearly self-similar blowup of the 2D Boussinesq and 3D Euler equations with smooth data I: Analysis.
\newblock \textit{arXiv preprint} \href{https://arxiv.org/abs/2210.07191}{arXiv:2210.07191}, 2022.

\bibitem{ChenHou25a}
J.~Chen, T.Y.~Hou. 
\newblock Stable nearly self-similar blowup of the 2D Boussinesq and 3D Euler equations with smooth data II: Rigorous numerics.
\newblock \textit{Multiscale Modeling \& Simulation}~\textbf{23} (1):25--130,  2025.

\bibitem{ChenHou25}
J.~Chen, T.Y.~Hou. 
\newblock Singularity formation in 3D Euler equations with smooth initial data and boundary.
\newblock \textit{Proc. Natl. Acad. Sci. U.S.A.}~\textbf{122} (27) e2500940122, 2025.

\bibitem{child} 
S.~Childress, G.R.~Ierley, E.A.~Spiegel, W.R.~Young. 
\newblock Blow-up of unsteady two-dimensional Euler and Navier-Stokes solutions having stagnation-point form. 
\newblock \textit{Journal of Fluid Mechanics}~\textbf{203}:1--22, 1989.

\bibitem{CMZ25}
D.~Cordoba, L.~Martinez-Zoroa, F.~Zheng.
\newblock Finite Time Singularities to the 3D Incompressible Euler Equations for Solutions {in $C^\infty(\mathbb{R}^3\setminus\{0\})\cap C^{1,\alpha}\cap L^2$}.
\newblock \textit{Annals of PDE}~\textbf{11} (2), p.19, 2025.

\bibitem{ConstantinArea}
P.~Constantin. 
\newblock Navier-Stokes equations and area of interfaces.
\newblock \textit{Communications in Mathematical Physics}~\textbf{129} (2):241--266, 1990.

\bibitem{Constantin94a} P.~Constantin.
\newblock Geometric and Analytic Studies in Turbulence. 
\newblock In: Sirovich, L. (eds) \textit{Trends and Perspectives in Applied Mathematics}. Applied Mathematical Sciences, vol.~100. Springer, New York, 1994. 

\bibitem{Constantin17} P.~Constantin. 
\newblock Analysis of Hydrodynamic Models.
\newblock CBMS-NSF Regional Conference Series in Applied Mathematics~\textbf{90}. SIAM, Philadelphia, 2017.

\bibitem{Cric}
P.~Constantin.
\newblock The Euler equations and nonlocal conservative Riccati equations. 
\newblock \textit{International Mathematics Research Notices}~\textbf{9}:455--465, 2000.

\bibitem{ConstantinFefferman93} 
P.~Constantin, C.~Fefferman. 
\newblock Direction of vorticity and the problem of global regularity for the Navier-Stokes equations. 
\newblock \textit{Indiana University Mathematics Journal}~\textbf{42} (3):775--789, 1993.

\bibitem{CFbook} 
P.~Constantin, C.~Foias.
\newblock {Navier-Stokes Equations}.
\newblock University of Chicago Press, Chicago, 1988.
 

\bibitem{EggersFontelos15} 
J.~Eggers, M.A.~Fontelos.
\newblock Singularities: formation, structure, and propagation.
\newblock Cambridge Texts in Applied Mathematics. Cambridge University Press, Cambridge, 2015.  

\bibitem{Elgindi21}
T.M.~Elgindi.
\newblock Finite-time Singularity Formation for $C^{1,\alpha}$ Solutions to the Incompressible Euler Equations on $\mathbb{R}^3$. 
\newblock \textit{Annals of Mathematics}~\textbf{194} (3):647--727,  2021.

\bibitem{Elgindi25}
T.M.~Elgindi. 
\newblock Dynamics of Ideal Fluid Flows.
\newblock In \emph{International Congress of Mathematicians 2026},  Vol.~5, 285--306. Society for Industrial and Applied Mathematics, 2026.


\bibitem{EGM22}
T.M.~Elgindi, T.-E.~Ghoul, N.~Masmoudi. 
\newblock On the stability of self-similar blow-up for $C^{1,\alpha}$ solutions to the incompressible Euler equations on $\mathbb{R}^3$. 
\newblock \textit{Cambridge Journal of Mathematics}~\textbf{9} (4):1035--1075, 2022.

\bibitem{ElgindiPasqua23}
T.M.~Elgindi, F.~Pasqualotto. 
\newblock From instability to singularity formation in incompressible fluids.
\newblock \textit{arXiv preprint} \href{https://arxiv.org/abs/2310.19780}{arXiv:2310.19780}, 2023.

\bibitem{gib}
J.D.~Gibbon, K.~Ohkitani. 
\newblock Numerical study of singularity formation in a class of Euler and Navier-Stokes flows.
\newblock \textit{Phys. Fluids}~\textbf{8}:1744--1752, 1996.


\bibitem{Guderley42}
K.G.~Guderley.
\newblock Starke kugelige und zylindrische Verdichtungsst{\"o}sse in der
  N{\"a}he des Kugelmittelpunktes bzw. der Zylinderachse.
\newblock \textit{Luftfahrtforschung}~\textbf{19}:302--312, 1942.

\bibitem{HouDeHuang22}
T.Y.~Hou, D.~Huang.
\newblock A potential two-scale traveling wave singularity for 3D incompressible Euler equations.
\newblock \textit{Physica D: Nonlinear Phenomena}~\textbf{435}:133257, 2022.

\bibitem{Hou23}
T.Y.~Hou.
\newblock Potential Singularity of the 3D Euler Equations in the Interior Domain.
\newblock \textit{Foundations of Computational Mathematics}~\textbf{23}:2203--2249, 2023. 


\bibitem{Hunter60}
C.~Hunter.
\newblock On the collapse of an empty cavity in water.
\newblock \textit{J. Fluid Mech.}, 8(2):241--263, 1960.

\bibitem{GuoLuo14}
G.~Luo, T.Y.~Hou. 
\newblock Potentially singular solutions of the 3D axisymmetric Euler equations.
\newblock \textit{Proc. Natl. Acad. Sci. U.S.A.}~\textbf{111}:12968--12973, 2014.

\bibitem{MRRS22a}
F.~Merle, P.~Rapha{\"e}l, I.~Rodnianski,   J.~Szeftel.
\newblock On the implosion of a compressible fluid {I}: {S}mooth self-similar inviscid profiles.
\newblock \textit{Ann. of Math. (2)}, 196 (2):567--778, 2022.

\bibitem{MRRS22b}
F.~Merle, P.~Rapha{\"e}l, I.~Rodnianski,   J.~Szeftel.
\newblock On the implosion of a compressible fluid II: Singularity formation.
\newblock \textit{Ann. of Math. (2)}, 196 (2):779--889, 2022.

\bibitem{NRS96}
J.~Ne\v{c}as, M.~R\r{u}\v{z}i\v{c}ka, V.~\v{S}ver\'ak. 
\newblock On Leray's self-similar solutions of the Navier-Stokes equations.
\newblock \textit{Acta Math.}~{\bf 176}:283--294, 1996.


\bibitem{PumirSiggia92}
A.~Pumir, E.D.~Siggia. 
\newblock Development of singular solutions to the axisymmetric Euler equations. 
\newblock \textit{Phys. Fluids A}~\textbf{4} (7):1472--1491, 1992.


\bibitem{Sedov46}
L.I.~Sedov.
\newblock Propagation of strong shock waves.
\newblock \textit{Prikladnaya Matematika i Mekhanika}, 10 (2):241--250, 1946.

\bibitem{Sedov18}
L.I.~Sedov.
\newblock Similarity and dimensional methods in mechanics.
\newblock CRC press, 2018.

\bibitem{Seregin05}
G.~Seregin.
\newblock On smoothness of $L_{3,\infty}$-solutions to the Navier-Stokes equations up to boundary.
\newblock \textit{Math. Ann.}~\textbf{332} (1):219--238, 2005. 

\bibitem{SWWZ25}
F.~Shao, D.~Wei, S.~Wang, Z.~Zhang. 
\newblock Blow-up of the 3-D compressible {N}avier-{S}tokes equations for monatomic gases.
\newblock \textit{arXiv preprint} \href{https://arxiv.org/abs/2501.15701}{arXiv:2501.15701}, 2025.

\bibitem{Shvydkoy13}
R.~Shvydkoy. 
\newblock A study of energy concentration and drain in incompressible fluids.
\newblock \textit{Nonlinearity}~\textbf{26} (2):425--436, 2013.

\bibitem{jtstuart} 
J.T.~Stuart. 
\newblock Singularities in three-dimensional compressible Euler flows with vorticity.
\newblock \textit{Proc. Roy. Soc. London Ser. A} 417:91--108, 1988.

\bibitem{Taylor50}
G.I.~Taylor.
\newblock The formation of a blast wave by a very intense explosion. {I}. {T}heoretical discussion.
\newblock \textit{Proceedings of the Royal Society of London. Series A.
  Mathematical and Physical Sciences}, 201 (1065):159--174, 1950.
  
\bibitem{Tsai98}
T.-P.~Tsai.
\newblock On Leray's self‐similar solutions of the Navier‐Stokes equations satisfying local energy estimates.
\newblock \textit{Arch. Rational Mech. Anal.}~\textbf{143} (1):29--51, 1998.  

\bibitem{vonNeumann47}
J.~von Neumann.
\newblock The point source solution.
\newblock In R.~S. Brode, J.~O. Hirschfelder, and S.~R. Brinkley (eds). 
\textit{Shock Hydrodynamics and Blast Waves}, pages 27--55. Los Alamos 
  Scientific Laboratory Report LA-2000, 1947.
  
\bibitem{WLGZB23}  
Y.~Wang, C.Y.~Lai, J.~Gomez-Serrano, T.~Buckmaster. 
\newblock Asymptotic self-similar blow-up profile for three-dimensional axisymmetric Euler equations using neural networks. 
\newblock \textit{Phys. Rev. Lett.}~\textbf{130}, 244002, 2023.

\bibitem{WangEtAl25}
Y.~Wang, et al.
\newblock Discovery of unstable singularities.
\newblock \textit{arXiv preprint} \href{https://arxiv.org/abs/2509.14185}{arXiv:2509.14185}, 2025.

\bibitem{WLLB25}
Y.~Wang, T.~Leger, C.Y.~Lai, T.~Buckmaster. 
\newblock Resolving Sharp Gradients of Unstable Singularities to Machine Precision via Neural Networks. 
\newblock \textit{arXiv preprint} \href{https://arxiv.org/abs/2511.22819}{arXiv:2511.22819}, 2025.

\bibitem{WLLAH25}
Y.~Wang, Z.~Liu, Z.~Li, A.~Anandkumar, T.Y.~Hou. 
\newblock High precision PINNs in unbounded domains:
application to singularity formulation in PDEs. 
\newblock \textit{arXiv preprint} \href{https://arxiv.org/abs/2506.19243}{arXiv:2506.19243}, 2025.


\end{thebibliography}
\end{document}